\renewcommand{\div}{\operatorname{div}}
\newcommand{\abs}[1]{\mathopen\lvert#1\mathclose\rvert}
\newcommand{\biggabs}[1]{\biggl\lvert#1\biggr\rvert}
\newcommand{\norm}[1]{\mathopen\lVert#1\mathclose\rVert}
\newcommand{\Bignorm}[1]{\mathopen\Big\lVert#1\mathclose\Big\rVert}
\newcommand{\N}{{\mathbb N}}
\newcommand{\R}{{\mathbb R}}
\newcommand{\cH}{\mathcal{H}}
\DeclareMathOperator{\supp}{supp}
\DeclareMathOperator{\diam}{diam}
\DeclareMathOperator{\sgn}{sgn}
\DeclareMathOperator{\capt}{cap}
\newcommand{\dif}{\,\mathrm{d}}
\newcommand{\loc}{_\mathrm{loc}}
\theoremstyle{plain}
\newtheorem{proposition}{Proposition}[section]
\newtheorem{lemma}[proposition]{Lemma}
\newtheorem{theorem}{Theorem}
\newtheorem{corollary}[proposition]{Corollary}
\theoremstyle{definition}
\theoremstyle{remark}
\newtheorem{claim}{Claim}
\numberwithin{equation}{section}
\title[Strong maximum principle for Schr\"{o}dinger operators]{Strong maximum principle for  Schr\"{o}dinger operators with singular potential}
\author{Luigi Orsina}
\address{
Luigi Orsina\hfill\break\indent
``Sapienza'', Universit\`a di Roma\hfill\break\indent
Dipartimento di Matematica \hfill\break\indent
P.le A.~Moro 2\hfill\break\indent
00185 Roma\hfill\break\indent
Italy}
\email{orsina@mat.uniroma1.it}
\author{Augusto C. Ponce}
\address{
Augusto C. Ponce\hfill\break\indent
 Université catholique de Louvain\hfill\break\indent
 Institut de recherche en mathématique et physique\hfill\break\indent
 Chemin du cyclotron 2, L7.01.02\hfill\break\indent
1348 Louvain-la-Neuve\hfill\break\indent
Belgium}
\email{Augusto.Ponce@uclouvain.be}
\subjclass[2010]{Primary: 35B05, 35B50; Secondary: 31B15, 31B35}
\keywords{Maximum principle, Schrödinger operator, Kato's inequality, capacity}
\begin{document}
\begin{abstract}
We prove that for every \(p > 1\) and for every potential \(V \in L^p\), any nonnegative function satisfying \(-\Delta u + V u \ge 0\) in an open connected set of \(\R^N\) is either identically zero or its level set \(\{u = 0\}\) has zero \(W^{2, p}\) capacity.
This gives an affirmative answer to an open problem of Bénilan and Brezis concerning a bridge between Serrin-Stampacchia's strong maximum principle for \(p > \frac{N}{2}\) and Ancona's strong maximum principle for \(p = 1\).
The proof is based on the construction of suitable test functions depending on the level set \(\{u = 0\}\), and on the existence of solutions of the Dirichlet problem for the Schrödinger operator with diffuse measure data.
\end{abstract}

\maketitle

\section{Introduction and main result}

We investigate the strong maximum principle for the Schr\"{o}dinger operator \(-\Delta + V\) where \(V : \Omega \to \R\) is a given potential and \(\Omega \subset \R^N\) is an open connected set.
More precisely, let \(u : \Omega \to \R\) be a nonnegative function satisfying
\begin{equation}
\label{eqPDEInequality}
- \Delta u + V u \ge 0
\quad \text{in \(\Omega\).} 
\end{equation}
Assuming that \(u\) vanishes somewhere in \(\Omega\), is it true that \(u\) vanishes identically in \(\Omega\)?
This is indeed the case when \(V = 0\), but in general the answer is negative.
For instance, the function \(u : \R^N \to \R\) defined by \(u(x) = \norm{x}^2\) satisfies
\[
- \Delta u + \frac{2N}{\norm{x}^2} u = 0 
\quad \text{in \(\R^N\).}
\] 
A similar example is given by the function \(u(x) = \norm{x}\); in this case the differential inequality \eqref{eqPDEInequality} holds in the sense of distributions in \(\R^N\).

In this paper, we provide a condition on the potential \(V\) and on the set where \(u\) vanishes which ensures that \(u\) equals zero in \(\Omega\).
Our main result is the following:

\begin{theorem}
\label{theoremStrongMaximumPrinciple}
Let \(\Omega \subset \R^N\) be an open connected set, \(p > 1\) and \(V \in L^p(\Omega)\).
If \(u \in L^1(\Omega)\) is a nonnegative function such that \(Vu \in L^1(\Omega)\) and
\[
- \Delta u + V u \ge 0
\quad \text{in the sense of distributions in \(\Omega\),}
\]
and if the average integral of \(u\) satisfies
\begin{equation}
\label{eqVanishingAverage}
\lim_{r \to 0}{\fint\limits_{B(x; r)} u} = 0
\end{equation}
for every point \(x\) in a compact subset of \(\Omega\) with positive \(W^{2, p}\) capacity, then \(u = 0\) almost everywhere in \(\Omega\).
\end{theorem}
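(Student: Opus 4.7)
The plan is to argue by contradiction using the two ingredients flagged in the abstract. Suppose $u \not\equiv 0$ in $\Omega$. By the connectedness of $\Omega$ and a standard propagation/covering argument, it suffices to prove the local version: if $B \Subset \Omega$ is a ball containing the compact set $K$ on which the averaged integrals of $u$ all vanish, then $u = 0$ almost everywhere on $B$. Propagation then yields the global conclusion by the connectedness of $\Omega$.

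For the local version, the first step is to produce a nontrivial nonnegative test function that depends on the zero set. Since $\operatorname{cap}_{W^{2,p}}(K) > 0$, the duality between $W^{2,p}$ and $W^{-2,p'}$ furnishes a nontrivial positive Radon measure $\mu$ supported on $K$ with $\mu \in W^{-2,p'}(B)$; classical potential theory makes such a $\mu$ diffuse with respect to $W^{2,p}$ capacity. Invoking the existence theory for the Dirichlet problem of the Schr\"odinger operator with diffuse measure data — the main analytic input flagged in the abstract — solve
\[
-\Delta \zeta + V\zeta = \mu \text{ in } B, \qquad \zeta = 0 \text{ on } \partial B,
\]
and obtain a nonnegative solution $\zeta$ whose singular part is carried by $K \subset \{u = 0\}$. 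This is the precise sense in which $\zeta$ depends on the level set where $u$ vanishes.

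The second step is to pair $\zeta$ with the distributional inequality $-\Delta u + Vu \geq 0$. After a regularization making the integration by parts legitimate between an $L^1$ subsolution $u$ and the measure-data solution $\zeta$, one expects
\[
0 \leq \int_B \zeta \, d(-\Delta u + Vu) = \int_B u \, d\mu.
\]
The key step is then to establish $\int u \, d\mu = 0$: this is exactly where the vanishing-average hypothesis at every point of $K$ meets the diffuseness of $\mu$ with respect to $W^{2,p}$ capacity, via a quasi-continuity / Lebesgue-differentiation argument which gives $u$ a pointwise sense $\mu$-almost everywhere through its Lebesgue averages. The resulting equality, combined with the strict positivity of $\zeta$ in $B \setminus K$ coming from the positivity of the Green potential, forces the nonnegative measure $-\Delta u + Vu$ to be concentrated on a $W^{2,p}$-polar set; an iteration of the same test-function construction, together with $Vu \in L^1$ to control the diffuse part of the Laplacian, should then upgrade this to $u \equiv 0$ on $B$.

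I expect the main obstacle to be the rigorous interpretation of the pairing $\int u \, d\mu$. Since $u$ is only in $L^1$ and $K$ may have zero Lebesgue measure when $p \leq N/2$, the classical pointwise values of $u$ on $K$ are meaningless, and one must reconcile the averaged-integral hypothesis with the diffuseness of $\mu$ through a capacitary representative theory; this is the technical heart of the argument. A secondary challenge is justifying the integration by parts between the $L^1$ subsolution $u$ and the measure-data solution $\zeta$ without additional a priori regularity on $u$, which should rely on approximating both objects simultaneously and exploiting the positivity of $-\Delta u + Vu$.
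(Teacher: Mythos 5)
Your first half tracks the paper's actual strategy: by Hahn--Banach one produces a positive measure \(\mu\) supported on \(K\) with \(\mu \in \big( W^{2, p} \cap W_0^{1, p} \big)'\), one solves \(-\Delta v + Vv = \mu\) on a smooth \(\omega \Subset \Omega\) containing \(K\) (Proposition~\ref{propositionExistenceDirichletProblem}), and one obtains \(\int_\omega u \,\mathrm{d}\mu = 0\) by mollifying \(u\) and using the vanishing averages on \(\supp\mu\) --- the paper does this after truncating \(u\) to \(\min\{u,\kappa\}\) via Kato's inequality so that dominated convergence applies, which disposes of what you call the ``technical heart''. The genuine gap is in your final step. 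From \(\int_\omega \zeta \,\mathrm{d}(-\Delta u + Vu) = 0\) with \(\zeta > 0\) off a small set, you can only conclude that \(-\Delta u + Vu = 0\) there, i.e.\ that \(u\) is a \emph{solution} rather than a strict supersolution. That is no contradiction and carries no information about \(u\) itself: Proposition~\ref{propositionExample} of the paper exhibits exactly such nontrivial nonnegative solutions vanishing on sets of zero capacity, and your proposed ``iteration'' would only reproduce \(\int_\omega u \,\mathrm{d}\mu' = 0\) for further measures \(\mu'\) on \(K\), which you already know. (A secondary problem: strict positivity of \(\zeta\) off \(K\) is itself a strong maximum principle for \(-\Delta + V\) and does not follow from positivity of the Green potential of \(-\Delta\), because of the absorption term \(V\zeta\).)

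The missing idea is the paper's \emph{perturbed} test function. One additionally solves \(-\Delta v_\epsilon + Vv_\epsilon = \chi_{\{v > \epsilon\}}\) and proves the comparison \(\epsilon v_\epsilon \le Cv\) (Proposition~\ref{propositionComparisonSolutions}, via Kato's inequality and the weak maximum principle, with \(C\) a bound for the torsion function). Then \(w_\epsilon = Cv - \epsilon v_\epsilon\) is a \emph{nonnegative} admissible test function with
\[
-\Delta w_\epsilon + V w_\epsilon = C\mu - \epsilon\, \chi_{\{v > \epsilon\}},
\]
and pairing it with \(u\) --- justified by Proposition~\ref{propositionTestFunction}, which shows the boundary term in the integration by parts has the right sign for nonnegative \(w_\epsilon \in W_0^{1,1}(\omega)\) --- yields
\[
\epsilon \int\limits_{\{v > \epsilon\}} u \;\le\; C \int\limits_\omega u \,\mathrm{d}\mu \;=\; 0,
\]
so \(u = 0\) almost everywhere on \(\{v > \epsilon\}\) for every \(\epsilon > 0\). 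Finally \(\{v = 0\}\) is Lebesgue-null by the \(L^1\)-potential strong maximum principle (Proposition~\ref{propositionStrongMaximumPrincipleLebesgue}) applied to \(v\), since otherwise \(v \equiv 0\) would contradict \(\mu \neq 0\). This \(\epsilon\)-perturbation, which converts the single scalar identity \(\int u\,\mathrm{d}\mu = 0\) into the vanishing of \(u\) on a set of full measure, is the crux that your proposal lacks.
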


Since \(u\) is nonnegative, the vanishing condition \eqref{eqVanishingAverage} identifies exactly the Lebesgue points of \(u\) where the precise representative of \(u\) vanishes.
By abuse of notation, we sometimes denote this set as \(\{u = 0\}\);
there is no ambiguity for instance when the function \(u\) is continuous.

The \(W^{2, p}\) capacity of a compact set \(K \subset \R^{N}\) is defined as
\[
\capt_{W^{2, p}}{(K)}
= \inf{\Big\{ \norm{\varphi}_{W^{2, p}(\R^{N})}^p : \varphi \in C_c^\infty(\R^{N})\ \text{nonnegative}\  \text{and}\ \varphi > 1\ \text{in \(K\)}  \Big\}}.
\]
This capacity has the same sets of positive capacity as the corresponding Bessel capacity by Calderón's isomorphism between \(W^{2, p}\) and \(L^p\) via Bessel potentials \citelist{ \cite{Stein:1970}*{Chapter~V, Theorem~3}  \cite{Adams_Hedberg:1996}*{Theorem~1.2.3}}.
By the relation between the Sobolev capacity and the Hausdorff measure~\cite{Adams_Hedberg:1996}*{Theorem~5.1.13}, we conclude that a nonnegative function satisfying \eqref{eqPDEInequality} is either almost everywhere zero or has a level set \(\{u = 0\}\) with Hausdorff dimension at most \(N - 2p\).

When \(p > \frac{N}{2}\), by the Morrey-Sobolev imbedding every singleton \(\{a\}\) has positive \(W^{2, p}\) capacity.
In this case, by Theorem~\ref{theoremStrongMaximumPrinciple} above we deduce that if \(u(a) = 0\) for some \(a \in \Omega\), then we have \(u = 0\) in \(\Omega\).
We then recover the strong maximum principle based on the Harnack inequality.
Such an inequality is obtained by a clever adaptation of Moser's iteration technique~\cites{Moser:1961}, and was implemented independently by Serrin~\cite{Serrin:1964}*{Theorem~5} and by Stampacchia~\cite{Stampacchia:1965}*{Corollaire~8.2} for solutions associated to the Schrödinger operator \(-\Delta + V\), and then by Trudinger~\cite{Trudinger:1973}*{Theorem~5.2} for supersolutions.

The counterpart of  Theorem~\ref{theoremStrongMaximumPrinciple} for \(p = 1\) and potentials \(V \in L^1(\Omega)\) is given in terms of the  --- Newtonian --- \(W^{1, 2}\) capacity.
This beautiful result  was originally proved by Ancona~\cite{Ancona:1979}*{Th\'{e}orème~9} using tools from Potential theory, and extends a unique continuation principle of Bénilan and Brezis~\cite{Benilan_Brezis:2004}*{Theorem~C.1} for nonnegative functions with compact support.
An alternative proof --- in the spirit of elliptic PDEs --- may be found in \cite{Brezis_Ponce:2003}; see also Section~\ref{sectionStrongMaximumPrincipleLebesgue} below.

Theorem~\ref{theoremStrongMaximumPrinciple} above gives an affirmative answer to a question raised by B\'{e}nilan and Brezis \cite{Benilan_Brezis:2004}*{Open problem~4} asking whether there would be a bridge between Serrin-Stampacchia's strong maximum principle for potentials \(V \in L^p(\Omega)\) with \(p > \frac{N}{2}\) and Ancona's strong maximum principle with \(p = 1\).
The link between Ancona's result and ours relies on the fact that the \(W^{1, 2}\) capacity may be seen as a limit of the \(W^{2, p}\) capacities as \(p\) tends to \(1\) \citelist{\cite{Brezis_Marcus_Ponce:2007}*{Theorem~4.E.1} \cite{Ponce:2013}*{Chapter~12}}; see also Section~\ref{sectionConverse} below.

The proof of Theorem~\ref{theoremStrongMaximumPrinciple} is based on a suitable choice of nonnegative test functions \(w\) for which we have the inequality
\[
\int\limits_\Omega u (-\Delta w + V w) \ge 0.
\]
By assumption this holds for test functions \(w \in C_c^\infty(\Omega)\).
We justify via an approximation procedure that  for every \(\epsilon > 0\) it is possible to choose \(w = w_\epsilon\) such that
\[
-\Delta w + V w
= \mu - \epsilon \chi_{A_\epsilon},
\]
where \(\mu\) is a positive measure supported by the set \(\{u = 0\}\), and \((A_\epsilon)_{\epsilon > 0}\) is a family of measurable subsets of \(\Omega\) such that the Lebesgue measure of \(\Omega \setminus A_\epsilon\) converges to zero as \(\epsilon\) tends to zero.
The assumption \(V \in L^p(\Omega)\) ensures the existence of solutions of this equation for any measure \(\mu\) which is diffuse with respect to the \(W^{2, p}\) capacity.
For a measure \(\mu\) supported by the set \(\{u = 0\}\), we have --- at least formally ---
\[
\int\limits_\Omega u \dif\mu = 0,
\]
and we deduce that, for every \(\epsilon > 0\),
\[
\epsilon \int\limits_{A_\epsilon} u \le 0.
\]
The conclusion follows as \(\epsilon\) tends to zero.
The tools needed to justify this argument are developed in Sections~\ref{sectionStrongMaximumPrincipleLebesgue}--\ref{sectionChoiceTestFunctions}.

In Section~\ref{sectionConverse} below we prove the following converse of Theorem~\ref{theoremStrongMaximumPrinciple}: for every compact set \(K \subset \Omega\) with zero \(W^{2, p}\) capacity there exist \(V \in L^p(\Omega)\) and a nonnegative smooth function \(u\) vanishing precisely on \(K\) such that \(-\Delta u + V u = 0\).
An adaptation of the proof also gives the counterpart for \(p = 1\) in terms of the \(W^{1, 2}\) capacity, which is also new in this context.
Our construction is motivated by de~la~Vallée~Poussin's interpretation of sets of zero capacity in terms of level sets \(\{w = +\infty\}\) of functions \(w\) with finite energy~\cite{DLVP:1932}*{\S 70}.

\section{A strong maximum principle in terms of the Lebesgue measure}
\label{sectionStrongMaximumPrincipleLebesgue}

One of the ingredients in the proof of Theorem~\ref{theoremStrongMaximumPrinciple} is a particular case of Ancona's strong maximum principle when the vanishing condition is stated in terms of the Lebesgue measure, which is enough in some applications~\cites{Benilan_Brezis:2004,Lucia:2005}; see also \cite{VanSchaftingen_Willem:2008}.
We present a sketch of the proof from \cite{Brezis_Ponce:2003} for the sake of completeness.

\begin{proposition}
\label{propositionStrongMaximumPrincipleLebesgue}
Let \(\Omega \subset \R^N\) be an open connected set and \(V \in L^1(\Omega)\).
If \(u \in W\loc^{1, 2}(\Omega)\) is a nonnegative function such that \(Vu \in L^1(\Omega)\) and
\[
- \Delta u + V u \ge 0
\quad \text{in the sense of distributions in \(\Omega\),}
\]
and if
\[
\lim_{r \to 0}{\fint\limits_{B(x; r)} u} = 0
\]
for every \(x\) in a subset of \(\Omega\) with positive Lebesgue measure, then \(u = 0\) almost everywhere in \(\Omega\).
\end{proposition}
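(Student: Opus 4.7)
The plan is to combine a logarithmic Kato-type inequality with the super-mean-value property of superharmonic functions. First, for every \(\epsilon>0\), I would establish the distributional inequality
\[
-\Delta\log(u+\epsilon)\ge -\abs{V} \quad\text{in }\mathcal{D}'(\Omega).
\]
Since \(u\in W\loc^{1,2}(\Omega)\) is nonnegative, \(\log(u+\epsilon)\in W\loc^{1,2}(\Omega)\) with gradient \(\nabla u/(u+\epsilon)\); given a nonnegative \(\varphi\in C_c^\infty(\Omega)\), the quotient \(\varphi/(u+\epsilon)\) is a bounded, compactly supported, nonnegative Sobolev test function. Testing \(-\Delta u + Vu\ge 0\) against it, expanding the gradient, discarding the nonnegative term \(\varphi\abs{\nabla u}^2/(u+\epsilon)^2\), and bounding \(\abs{Vu/(u+\epsilon)}\le\abs{V}\) yields the claim.

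Pick a ball \(B\Subset\Omega\) in which the Lebesgue zero set of \(u\) has positive measure, and let \(v\in W_0^{1,q}(B)\) (for every \(q<N/(N-1)\)) be a Stampacchia-type solution of \(-\Delta v=\abs{V}\) in \(B\) with vanishing boundary trace; it exists because \(\abs{V}\in L^1(B)\), is nonnegative by the maximum principle, and is finite almost everywhere. Setting \(h_\epsilon:=\log(u+\epsilon)+v\in L^1(B)\), the previous step gives \(-\Delta h_\epsilon\ge 0\) in \(\mathcal{D}'(B)\); the precise representative \(h_\epsilon^*\) is therefore classically superharmonic in \(B\), finite at every point, and satisfies the super-mean-value inequality on every ball contained in \(B\).

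Let \(x\in B\) be a Lebesgue point of \(v\) with \(u(x)=0\) in the sense of \eqref{eqVanishingAverage}. Jensen's inequality combined with the trivial lower bound \(\log\epsilon\le\log(u+\epsilon)\) gives \(\lim_{r\to 0}\fint_{B(x,r)}\log(u+\epsilon)=\log\epsilon\), so \(h_\epsilon^*(x)=\log\epsilon+v^*(x)\). For any ball \(B(x,R)\subset B\) the super-mean-value inequality, after subtracting \(\log\epsilon\), becomes
\[
v^*(x)-\fint_{B(x,R)} v \;\ge\; \fint_{B(x,R)} \log\bigl(1+u/\epsilon\bigr).
\]
The right-hand side is nonnegative and, by monotone convergence, tends to \(+\infty\) as \(\epsilon\downarrow 0\) unless \(u=0\) almost everywhere on \(B(x,R)\). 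Since the left-hand side is a finite constant independent of \(\epsilon\), I conclude that \(u\equiv 0\) almost everywhere on \(B(x,R)\); iterating the same argument starting from any Lebesgue point of this newly identified zero set then fills out the whole ball \(B\).

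A standard propagation argument based on the connectedness of \(\Omega\) extends this local conclusion to \(u=0\) almost everywhere in \(\Omega\): the set of points of \(\Omega\) admitting a neighborhood on which \(u=0\) almost everywhere is open by construction, nonempty by the previous paragraph, and closed because any accumulation point is surrounded by a ball in which the zero set has positive measure, bringing one back to the ball version above. The main obstacle is the rigorous derivation of the logarithmic Kato inequality without any \(L^\infty\loc\) bound on \(u\), which is precisely what dictates the test function \(\varphi/(u+\epsilon)\) with \(\varphi\in C_c^\infty\) rather than a more naive substitution.
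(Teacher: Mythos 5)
Your argument is correct, but it takes a genuinely different route from the paper's. The paper tests the inequality with \(\varphi^2/(1+u)\) to obtain the energy bound \(\int_\Omega \tfrac{\abs{\nabla u}^2}{(1+u)^2}\varphi^2 \le 4\int_\Omega\abs{\nabla\varphi}^2+2\int_\Omega V^+\varphi^2\), whose right-hand side is independent of \(u\); it then applies the Poincaré inequality to \(\log(1+u)\), which vanishes on a set of positive measure, exploits the invariance of the estimate under \(u\mapsto u/\delta\), and concludes by Fatou that \(\{u>0\}\) must be null since \(\norm{\log(1+u/\delta)}_{L^2(\omega)}\) stays bounded while the integrand blows up. You convert the same logarithmic substitution into a pointwise potential-theoretic statement instead: \(-\Delta\log(u+\epsilon)\ge-\abs{V}\), corrected by the Newtonian potential \(v\) of \(\abs{V}\), yields a superharmonic function, and the super-mean-value inequality at a single vanishing point of \(u\) that is also a Lebesgue point of \(v\) forces \(\fint_{B(x,R)}\log(1+u/\epsilon)\) to remain bounded as \(\epsilon\downarrow0\), which by monotone convergence kills \(\{u>0\}\) on the ball; connectedness then propagates the conclusion. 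Your route is closer in spirit to Ancona's potential-theoretic proof and replaces the Poincaré inequality by the mean value property together with the a.e.\ finiteness of \(v\); what the paper's route buys is that it upgrades immediately to the sharper hypothesis of vanishing on a compact set of positive \(W^{1,2}\) capacity (one only needs the corresponding Poincaré inequality), whereas your argument genuinely uses positive Lebesgue measure to guarantee that the vanishing set meets the Lebesgue points of \(v\). Two cosmetic slips do not affect the conclusion: \(h_\epsilon^*\) need not be finite at \emph{every} point (a superharmonic function may equal \(+\infty\) on a polar set), which is precisely why you must, and do, evaluate it at a Lebesgue point of \(v\); and the admissibility of the test function \(\varphi/(u+\epsilon)\) requires the same kind of approximation argument the paper tacitly invokes for \(\varphi^2/(1+u)\).
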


\begin{proof}
For every \(\varphi \in C_c^\infty(\Omega)\), by an approximation argument we may use the test function \(\dfrac{\varphi^2}{1 + u}\) in the weak inequality satisfied by \(u\) to get
\[
\int\limits_\Omega \frac{\abs{\nabla u}^2}{(1 + u)^2} \varphi^{2}
\le 4 \int\limits_\Omega \abs{\nabla \varphi}^2 + 2\int\limits_\Omega V^+\varphi^{2}.
\] 

Given a connected open subset \(\omega \Subset \Omega\) such that \(u = 0\) in a subset of $\omega$ of positive Lebesgue measure, the function \(\log{(1 + u)}\) also vanishes in a subset of $\omega$ of positive measure, whence by the Poincar\'{e} inequality --- proved for example by a contradiction argument ---, we have
\[
\int\limits_\omega \abs{\log(1 + u)}^2
\le
C_1 \int\limits_\omega \abs{\nabla \log(1 + u)}^2
=
C_1 \int\limits_\omega \frac{\abs{\nabla u}^2}{(1 + u)^2}.
\]
Choosing \(\varphi\) such that \(\varphi = 1\) in \(\omega\), we deduce that
\[
\frac{1}{C_1} \int\limits_\omega \abs{\log(1 + u)}^2
\le
4 \int\limits_\Omega \abs{\nabla \varphi}^2 + 2\int\limits_\Omega V^+\varphi^{2}.
\]
In particular, the right-hand side does not depend on \(u\);
the constant \(C_1\) arising from the Poincar\'{e} inequality depends on the size of the level set \(\{u = 0\}\).
In view of the linear nature of the differential inequality satisfied by \(u\), the estimate above is thus invariant if we replace \(u\) by \(\frac{u}{\delta}\) for any \(\delta > 0\).
As \(\delta\) tends to zero, the function \(\log(1 + \frac{u}{\delta})\) diverges to infinity on the set \(\{u > 0\}\).
On the other hand, by the above estimate the functions \(\log(1 + \frac{u}{\delta})\) are bounded in \(L^2(\omega)\) independently of \(\delta\).
By Fatou's lemma, it follows that \(\{u > 0\}\) must have zero Lebesgue measure in \(\omega\).
\end{proof}

Compared with Theorem~\ref{theoremStrongMaximumPrinciple} we have assumed that \(u \in W\loc^{1, 2}(\Omega)\).
We now explain why this is not a restriction for establishing the strong maximum principle for merely \(L^1\) functions by using a truncation argument.
We first observe that since \(u\) is nonnegative, we have
\[
V u \le V^+ u, 
\]
so replacing \(V\) by \(V^+\) if necessary, we may assume from the beginning that the potential \(V\) is nonnegative.
Next, for every \(\kappa > 0\), the function \(\min{\{u, \kappa\}}\) is also a supersolution for the Schr\"{o}dinger operator \(-\Delta + V\).
This may be seen as a consequence of the following variant of Kato's inequality:

\begin{lemma}
\label{lemmaKato}
Let \(v \in L^1(\Omega)\) and \(f \in L^1(\Omega)\) be such that
\[
\Delta v 
\le f
\quad \text{in the sense of distributions in \(\Omega\).}
\]
Then, for every \(\kappa \in \R\), we have
\[
\Delta \min{\{v, \kappa\}} 
\le \chi_{\{v < \kappa\}} f
\quad \text{in the sense of distributions in \(\Omega\).}
\]
\end{lemma}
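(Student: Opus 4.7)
My plan is to reduce to the classical smooth chain rule by mollification and smooth truncation, and then pass to the limit in two successive stages. Fix a relatively compact open subset \(\omega \Subset \Omega\), let \(\rho_\epsilon\) be a standard nonnegative mollifier, and set \(v_\epsilon = v * \rho_\epsilon\) and \(f_\epsilon = f * \rho_\epsilon\); for \(\epsilon > 0\) small these are smooth on \(\omega\), converge to \(v\) and \(f\) in \(L^1(\omega)\) respectively, and convolving the distributional inequality \(\Delta v \le f\) with \(\rho_\epsilon\) yields \(\Delta v_\epsilon \le f_\epsilon\) pointwise on \(\omega\).

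Next I would approximate \(t \mapsto \min\{t, \kappa\}\) by a family of smooth concave nondecreasing functions \(H_\delta \in C^\infty(\R)\) with \(0 \le H'_\delta \le 1\), chosen so that \(H'_\delta \equiv 1\) on \((-\infty, \kappa - 3\delta]\) and \(H'_\delta \equiv 0\) on \([\kappa - \delta, +\infty)\). Such a profile exists because the resulting increase of \(H_\delta\) across the transition interval of length \(2\delta\) can be any value in \((0, 2\delta)\), consistent with a smoothly decreasing \(H'_\delta\). As \(\delta \to 0\), one then has \(H_\delta(t) \to \min\{t, \kappa\}\) and \(H'_\delta(t) \to \chi_{\{t < \kappa\}}\) pointwise in \(\R\), including at \(t = \kappa\), since \(H'_\delta(\kappa) = 0\) for every \(\delta > 0\).

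Since \(v_\epsilon\) and \(H_\delta\) are smooth, the classical chain rule gives
\[
\Delta H_\delta(v_\epsilon)
= H'_\delta(v_\epsilon) \Delta v_\epsilon + H''_\delta(v_\epsilon) \abs{\nabla v_\epsilon}^2
\le H'_\delta(v_\epsilon) f_\epsilon
\]
pointwise on \(\omega\), using \(H'_\delta(v_\epsilon) \ge 0\), \(\Delta v_\epsilon \le f_\epsilon\), and \(H''_\delta(v_\epsilon) \le 0\). Testing against an arbitrary nonnegative \(\varphi \in C_c^\infty(\omega)\) and integrating by parts yields
\[
\int\limits_\omega H_\delta(v_\epsilon) \Delta \varphi
\le \int\limits_\omega H'_\delta(v_\epsilon) f_\epsilon \varphi.
\]
I would then first let \(\epsilon \to 0\) with \(\delta\) fixed: the Lipschitz character of \(H_\delta\) delivers \(H_\delta(v_\epsilon) \to H_\delta(v)\) in \(L^1(\omega)\), while for the right-hand side the splitting
\[
H'_\delta(v_\epsilon) f_\epsilon - H'_\delta(v) f
= H'_\delta(v_\epsilon)(f_\epsilon - f) + \bigl(H'_\delta(v_\epsilon) - H'_\delta(v)\bigr) f
\]
reduces the convergence to \(\norm{f_\epsilon - f}_{L^1(\omega)} \to 0\) together with dominated convergence for the second term, using continuity of \(H'_\delta\) and pointwise a.e.\ convergence \(v_\epsilon \to v\) along a subsequence. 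Letting then \(\delta \to 0\), the pointwise limits \(H_\delta(v) \to \min\{v, \kappa\}\) and \(H'_\delta(v) \to \chi_{\{v < \kappa\}}\), together with the uniform bounds \(\abs{H_\delta(v)} \le \abs{v} + \abs{\kappa}\) and \(0 \le H'_\delta \le 1\), permit one final application of dominated convergence against \(\abs{v}\) and \(\abs{f}\), yielding the desired distributional inequality on \(\omega\). Since any \(\varphi \in C_c^\infty(\Omega)\) is supported in some such \(\omega\), the inequality holds on all of \(\Omega\).

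The delicate point will be the choice of \(H_\delta\): a naive symmetric mollification of \(\min\{\cdot, \kappa\}\) would give \(H'_\delta(\kappa) = \tfrac{1}{2}\) for every \(\delta > 0\) and produce in the limit the weaker bound with \((\chi_{\{v < \kappa\}} + \tfrac{1}{2}\chi_{\{v = \kappa\}}) f\) in place of \(\chi_{\{v < \kappa\}} f\) on the right-hand side. Pushing the transition of \(H'_\delta\) strictly to the left of \(\kappa\) suppresses any contribution from the level set \(\{v = \kappa\}\) and gives the desired pointwise limit; the remaining steps are routine uniform or dominated convergence arguments.
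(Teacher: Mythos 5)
Your proof is correct, and it follows the standard two-parameter approximation (mollification of \(v\) combined with a smooth concave truncation \(H_\delta\)) that the paper itself does not reproduce but delegates to Kato's original argument and the cited references, where precisely this scheme is carried out. The one genuinely delicate point --- choosing \(H_\delta\) so that \(H'_\delta\) vanishes on a left neighbourhood of \(\kappa\), which suppresses any contribution from the level set \(\{v = \kappa\}\) and yields \(\chi_{\{v < \kappa\}} f\) rather than a weaker right-hand side --- is exactly the adjustment required, and you identify and justify it explicitly.
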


Here, \(\chi_A\) denotes the characteristic function of a set \(A \subset \R^N\).
Kato's inequality has been introduced by Kato to study Schrödinger operators with singular potentials \(V\).
Strictly speaking, Kato's inequality concerns functions \(v\) such that \(\Delta v \in L^1(\Omega)\)~\cite{Kato:1972}*{Lemma~A}.
This need not be true in our case since \(\Delta v\) may be a locally finite measure, but the proof can be performed in the same way by approximation~\citelist{\cite{Ponce:2012}*{Propositions~5.7 and 5.9} \cite{Ponce:2013}*{Chapter~6}}.
A more precise version of Kato's inequality can be found for instance in \cites{DalMaso_Murat_Orsina_Prignet:1999, Brezis_Ponce:2004}, although Lemma~\ref{lemmaKato} suffices for our purposes in this paper. 

If \(u\) is a supersolution for the Schr\"{o}dinger operator with potential \(V \geq 0\) --- as in the statement of Proposition~\ref{propositionStrongMaximumPrincipleLebesgue} ---, then it follows from Kato's inequality above with \(f = Vu\) that, for every \(\kappa > 0\), we have
\[
\Delta \min{\{u, \kappa\}}
\le \chi_{\{u < \kappa\}} Vu
\le V \min{\{u, \kappa\}}
\] 
in the sense of distributions in \(\Omega\), whence \(\min{\{u, \kappa\}}\) is also a supersolution.
In particular, by Schwartz's characterization of nonnegative distributions~\cite{Schwartz:1945}, \(\Delta \min{\{u, \kappa\}}\) is a locally finite measure, and this implies by interpolation that \(\min{\{u, \kappa\}} \in W\loc^{1, 2}(\Omega)\).
We may thus apply the proposition above with \(\min{\{u, \kappa\}}\), and deduce that \(u = 0\) almost everywhere in \(\Omega\).

The proof of Proposition~\ref{propositionStrongMaximumPrincipleLebesgue} still applies under the weaker assumption that 
\[
\lim_{r \to 0}{\fint\limits_{B(x; r)} u} = 0
\]
in a compact subset with positive \(W^{1, 2}\) capacity.
Indeed, this assumption guarantees that the Poincar\'{e} inequality holds for the function \(\log{(1 + u)}\) and the rest of the proof remains unchanged.
This argument due to Brezis and Ponce~\cite{Brezis_Ponce:2003} provides Ancona's strong maximum principle for potentials \(V \in L^1(\Omega)\) in full generality.

\section{Existence of solutions for the Schr\"{o}dinger operator with measure data}

Another ingredient --- interesting on its own --- in the proof of Theorem~\ref{theoremStrongMaximumPrinciple} concerns the existence of solutions of the Dirichlet problem for the Schrödinger operator with measure data,
\begin{equation}
\label{eqDirichletProblem}
\left\{
\begin{alignedat}{2}
- \Delta v + V v & = \mu	&& \quad \text{in \(\Omega\),}\\
v & = 0 && \quad \text{on \(\partial\Omega\).}
\end{alignedat}
\right.
\end{equation}
We look for solutions of this problem in the sense of Littman, Stampacchia and Weinberger~\cite{Littman_Stampacchia_Weinberger:1963}*{Definition~5.1}.
More precisely, given a finite Borel measure \(\mu\) in \(\Omega\) and \(V \in L^1(\Omega)\), we say that \(v \in L^1(\Omega)\) satisfies the linear Dirichlet problem above if \(V v \in L^1(\Omega)\) and if, for every \(\zeta \in C^\infty(\overline\Omega)\) such that \(\zeta = 0\) on \(\partial\Omega\), we have
\[
\int\limits_\Omega v (-\Delta\zeta + V\zeta)
= \int\limits_\Omega \zeta \dif\mu.
\] 
In the sequel, we denote this class of test functions \(\zeta\) by \(C_0^\infty(\overline\Omega)\).{}
For smooth bounded domains, this notion of solution is equivalent to asking that \(v \in W_0^{1, 1}(\Omega)\) and that the equation is satisfied in the sense of distributions in \(\Omega\) \citelist{\cite{Ponce:2012}*{Corollary~4.5} \cite{Ponce:2013}*{Chapter~6}}.

\begin{proposition}
\label{propositionExistenceDirichletProblem}
Let \(\Omega \subset \R^N\) be a smooth bounded open set, \(p > 1\) and let \(V \in L^p(\Omega)\) be a nonnegative function.
For every nonnegative finite Borel measure \(\mu\) in \(\Omega\) such that \(\mu \in \big( W^{2, p}(\Omega) \cap W_0^{1, p}(\Omega) \big)'\) there exists \(v \in L^{p'}(\Omega)\) satisfying the Dirichlet problem \eqref{eqDirichletProblem}.
\end{proposition}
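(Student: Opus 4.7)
The plan is to construct $v$ by approximating the potential $V$, solving a sequence of problems with bounded potentials, and extracting a weak limit in $L^{p'}(\Omega)$. The decisive ingredient is a uniform $L^{p'}$ bound obtained through duality against the pure Laplacian rather than the full Schr\"odinger operator, which sidesteps the delicate issue of elliptic regularity for $-\Delta+V$ when $V$ is merely in $L^p$. Concretely, set $V_n=\min\{V,n\}$, a bounded nonnegative potential converging strongly to $V$ in $L^p(\Omega)$ by dominated convergence. For each $n$, classical Stampacchia duality for the Schr\"odinger operator with bounded nonnegative potential --- built via the Fredholm alternative and $L^p$ Calder\'on--Zygmund regularity for $-\Delta$ --- produces a solution operator $T_n\colon L^p(\Omega)\to W^{2,p}(\Omega)\cap W_0^{1,p}(\Omega)$ of the adjoint problem $-\Delta\zeta+V_n\zeta=f$, and its adjoint $T_n^{\ast}$ yields a nonnegative $v_n=T_n^{\ast}(\mu)\in L^{p'}(\Omega)$ satisfying
\[
\int\limits_\Omega v_n(-\Delta\zeta+V_n\zeta)=\int\limits_\Omega\zeta\dif\mu
\]
for every $\zeta\in W^{2,p}(\Omega)\cap W_0^{1,p}(\Omega)$, in particular for $\zeta\in C_0^\infty(\overline\Omega)$. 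The sign $v_n\ge 0$ follows from $\mu\ge 0$ together with $T_n(f)\ge 0$ whenever $f\ge 0$, which is the maximum principle for the bounded potential $V_n\ge 0$.

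The key observation is a uniform bound $\|v_n\|_{L^{p'}(\Omega)}\le C\|\mu\|_{(W^{2,p}\cap W_0^{1,p})'}$, independent of $n$. For any nonnegative $f\in L^p(\Omega)$, let $\zeta_f\in W^{2,p}(\Omega)\cap W_0^{1,p}(\Omega)$ solve the pure Poisson problem $-\Delta\zeta_f=f$ with $\zeta_f=0$ on $\partial\Omega$; the $L^p$ Calder\'on--Zygmund theory on smooth domains yields $\|\zeta_f\|_{W^{2,p}(\Omega)\cap W_0^{1,p}(\Omega)}\le C\|f\|_{L^p(\Omega)}$ and the weak maximum principle gives $\zeta_f\ge 0$. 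Testing the weak formulation for $v_n$ against $\zeta_f$ produces
\[
\int\limits_\Omega v_n f=\int\limits_\Omega\zeta_f\dif\mu-\int\limits_\Omega V_n v_n\zeta_f\le\int\limits_\Omega\zeta_f\dif\mu\le C\|\mu\|_{(W^{2,p}\cap W_0^{1,p})'}\|f\|_{L^p(\Omega)},
\]
because $V_n v_n\zeta_f\ge 0$; taking the supremum over nonnegative $f\in L^p(\Omega)$ yields the announced bound.

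To conclude, Banach--Alaoglu extracts a subsequence with $v_n\rightharpoonup v$ in $L^{p'}(\Omega)$. Because $V_n\to V$ strongly in $L^p(\Omega)$ while $v_n$ is bounded in $L^{p'}(\Omega)$, the splitting $V_n v_n\zeta-Vv\zeta=(V_n-V)v_n\zeta+V(v_n-v)\zeta$ together with H\"older's inequality gives $\int\limits_\Omega V_n v_n\zeta\to\int\limits_\Omega Vv\zeta$ for every $\zeta\in C_0^\infty(\overline\Omega)$, while $\int\limits_\Omega v_n(-\Delta\zeta)\to\int\limits_\Omega v(-\Delta\zeta)$ is immediate from weak convergence. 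Hence $v\in L^{p'}(\Omega)$ solves the Dirichlet problem~\eqref{eqDirichletProblem} and $Vv\in L^1(\Omega)$ by H\"older's inequality. The main technical obstacle is the existence step for bounded $V_n$ --- requiring a Fredholm argument, since $-\Delta+V_n$ is not directly coercive in a variational sense at the $L^p$ level for all $p>1$ --- and the admissibility of $\zeta_f\in W^{2,p}(\Omega)\cap W_0^{1,p}(\Omega)$ as a test function in the weak formulation for $v_n$; the latter is handled by density of $C_0^\infty(\overline\Omega)$ in $W^{2,p}(\Omega)\cap W_0^{1,p}(\Omega)$ on smooth domains, combined with the $L^{p'}$-integrability of $v_n$ already provided by the Stampacchia construction.
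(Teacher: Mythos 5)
Your proof is correct, and it follows the same overall skeleton as the paper --- truncate the potential, solve the approximate problems with bounded potentials, and compare with the pure Poisson problem \(-\Delta w = \mu\) --- but the limit is extracted by a genuinely different mechanism. The paper uses Kato's inequality (Lemma~\ref{lemmaKato}) together with the weak maximum principle to show that the approximating solutions \(v_i\) are nonnegative, non\nobreakdash-increasing in \(i\), and dominated pointwise by \(w\) (which belongs to \(L^{p'}(\Omega)\) precisely because of the functional assumption on \(\mu\)); monotone and dominated convergence then pass to the limit with no compactness argument and no need to make sense of testing against \(W^{2,p}\) functions. You instead build \(v_n\) as \(T_n^{\ast}\mu\), prove a uniform \(L^{p'}\) bound by duality against \(\zeta_f\) with \(-\Delta\zeta_f = f\), and extract a weak limit, handling the product \(V_n v_n\) via the strong--weak pairing \(V_n \to V\) in \(L^p\), \(v_n \rightharpoonup v\) in \(L^{p'}\); this is a clean and complete argument. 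What your route costs is the Fredholm/bootstrap machinery needed to know that \(-\Delta + V_n\) maps \(W^{2,p}(\Omega)\cap W_0^{1,p}(\Omega)\) onto \(L^p(\Omega)\) for every \(p>1\) (including \(p\) below the variational range), which the paper sidesteps by quoting Stampacchia's existence theory and testing only with \(C_0^\infty(\overline\Omega)\) functions. One point you should make explicit: the nonnegativity \(v_n = T_n^{\ast}\mu \ge 0\) uses that the continuous extension of \(\mu\) to \(W^{2,p}(\Omega)\cap W_0^{1,p}(\Omega)\) is positive on nonnegative elements of that space --- equivalently, that nonnegative elements can be approximated in the \(W^{2,p}\) norm by nonnegative functions of \(C_0^\infty(\overline\Omega)\), or that \(\langle\mu,\zeta\rangle\) equals the integral of the quasi-continuous representative of \(\zeta\) against \(\mu\). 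This is standard but not automatic from the definition of the dual norm; if you prefer to avoid it, \(v_n \ge 0\) also follows from Kato's inequality and the weak maximum principle exactly as in the paper.
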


We denote by \(p'\) the conjugate exponent of \(p\),
\[
\frac{1}{p} + \frac{1}{p'} = 1.
\]
The assumption
\(\mu \in \big( W^{2, p}(\Omega) \cap W_0^{1, p}(\Omega) \big)'\) means that there exists a constant \(C > 0\) such that, for every \(\zeta \in C_0^\infty(\overline\Omega)\), we have
\begin{equation}
\label{eqFunctionalEstimate}
\biggabs{\int\limits_\Omega \zeta \dif\mu}
\le C \norm{\zeta}_{W^{2, p}(\Omega)}.
\end{equation}
By density of \(C_0^\infty(\overline\Omega)\), this is equivalent to the existence of a --- unique --- continuous extension to \(W^{2, p}(\Omega) \cap W_0^{1, p}(\Omega)\) of the linear functional
\[
\zeta \in C_0^\infty(\overline\Omega)
\longmapsto \int\limits_\Omega \zeta \dif\mu.
\]

When \(p > \frac{N}{2}\), the existence of solutions of the Dirichlet problem is proved by Stampacchia~\cite{Stampacchia:1965}*{Th\'{e}orème~9.1}.
In this case, every finite Borel measure \(\mu\) satisfies \(\mu \in \big( W^{2, p}(\Omega) \cap W_0^{1, p}(\Omega) \big)'\) by the Morrey-Sobolev inequality, and the existence of solutions is obtained using the Riesz representation theorem in Lebesgue spaces.

The functional estimate \eqref{eqFunctionalEstimate} is equivalent to the fact that the solution of the Dirichlet problem
\begin{equation}
\label{comparison}
\left\{
\begin{alignedat}{2}
- \Delta w & = \mu	&& \quad \text{in \(\Omega\),}\\
w & = 0 && \quad \text{on \(\partial\Omega\),}
\end{alignedat}
\right.
\end{equation}
belongs to \(L^{p'}(\Omega)\). 
We explain the direct implication, which we shall need in the proof of Proposition~\ref{propositionExistenceDirichletProblem}.
By the assumption on \(\mu\) and by the Calder\'{o}n-Zygmund elliptic estimates~\cite{Gilbarg_Trudinger:1998}*{Theorem~9.14}, for every \(\zeta \in C_0^\infty(\overline\Omega)\) we have
\[
\biggabs{\int\limits_\Omega w \Delta\zeta}
=
\biggabs{\int\limits_\Omega \zeta \dif\mu}
\le 
C \norm{\zeta}_{W^{2, p}(\Omega)}
\le 
C' \norm{\Delta\zeta}_{L^{p}(\Omega)}.
\]
Thus, for every \(\psi \in C^\infty(\overline\Omega)\), we get
\[
\biggabs{\int\limits_\Omega w \psi}
\le 
C' \norm{\psi}_{L^{p}(\Omega)},
\]
and this implies 
\(
w \in L^{p'}(\Omega)
\).{}

\medskip
It is also possible to show that for every compact set \(K \subset \Omega\) with positive \(W^{2, p}\) capacity there exists a positive finite Borel measure \(\mu\) supported in \(K\) such that \(\mu \in \big( W^{2, p}(\Omega) \cap W_0^{1, p}(\Omega) \big)'\).
This is an application of the Hahn-Banach theorem. 
Indeed, the function \(p : C^0(K) \to \R\) defined for all continuous functions \(f : K \to \R\)
by
\[
p(f)
= \inf{\big\{ \norm{\zeta}_{W^{2, p}(\Omega)} : \zeta \in C_0^\infty(\overline\Omega),\ \zeta \ge f\ \text{in \(K\)} \big\}}
\]
is a sublinear function, and \(p(\chi_K) > 0\) by our assumption on the \(W^{2, p}\) capacity of \(K\).
By the Hahn-Banach theorem, there exists a nontrivial linear functional \(L : C^0(K) \to \R\) such that \(L \le p\).
In particular, \(L\) is nonnegative, whence by the Riesz representation theorem in \(C^0(K)\) the functional \(L\) can be written in terms of a positive measure \(\mu\) \cite{Ponce:2013}*{Appendix~A}.

\begin{proof}[Proof of Proposition~\ref{propositionExistenceDirichletProblem}]
We apply an approximation argument based on the potential \(V\).
For this purpose,  let \((V_i)_{i \in \N}\) be a nondecreasing sequence of nonnegative bounded potentials converging pointwisely to \(V\) --- each \(V_i\) could be taken as a truncation of \(V\).
By Stampacchia's existence result for bounded potentials, for each \(i \in \N\) there exists a function \(v_i\) satisfying the Dirichlet problem with potential \(V_i\),
\[
\left\{
\begin{alignedat}{2}
- \Delta v_i + V_i v_i & = \mu	&& \quad \text{in \(\Omega\),}\\
v_i & = 0 && \quad \text{on \(\partial\Omega\).}
\end{alignedat}
\right.
\]
Using Kato's inequality (Lemma~\ref{lemmaKato}), we show that the sequence \((v_i)_{i \in \N}\) is (1) nonnegative and (2) non-increasing.
To verify the first assertion, we observe that since the measure \(\mu\) is nonnegative,
\[
\Delta v_i \le V_i v_i 
\quad \text{in the sense of distributions in \(\Omega\).}
\]
Since the potential \(V_i\) is nonnegative, it follows from Kato's inequality that
\[
\Delta \min{\{v_i, 0\}}
\le \chi_{\{v_i < 0\}} V_i v_i
\le 0
\]
in the sense of distributions in \(\Omega\).
Applying the weak maximum principle (Lemma~\ref{lemmaWeakMaximumPrinciple}), we deduce that \(\min{\{v_i, 0\}} \ge 0\) almost everywhere in \(\Omega\), whence \(v_i\) is nonnegative.

For the second assertion, we subtract the equations satisfied by \(v_i\) and \(v_{i+1}\).
Since \(v_i\) is nonnegative and \(V_{i + 1} \ge V_i\),
\[
\Delta (v_i - v_{i + 1}) 
\le V_i(v_i - v_{i+1})
\quad
\text{in the sense of distributions in \(\Omega\).}
\]
We deduce as above that \(v_i - v_{i+1}\) is nonnegative.

The weak maximum principle (Lemma~\ref{lemmaWeakMaximumPrinciple}) implies that, for every \(i \in \N\), 
\[
v_i \le w,
\]
where \(w\) is the solution of the Dirichlet problem \eqref{comparison}.
It follows from the Monotone convergence theorem that the sequence \((v_i)_{i \in \N}\) converges in \(L^1(\Omega)\) to its pointwise limit \(v\).
By the functional assumption on the measure \(\mu\), we have \(w \in L^{p'}(\Omega)\), whence the nonnegative pointwise limit \(v\) also belongs to \(L^{p'}(\Omega)\).
In addition, 
\[
0 \le V_i v_i \le Vw,
\]
where the function in the right-hand side belongs to \(L^1(\Omega)\).
By the Dominated convergence theorem, we deduce that the sequence \((V_i v_i)_{i \in \N}\) converges in \(L^1(\Omega)\) to \(V v\).
Therefore, the function \(v\) satisfies the Dirichlet problem \eqref{eqDirichletProblem} with potential \(V\).
\end{proof}

There is an alternative proof of Proposition~\ref{propositionExistenceDirichletProblem} based on the method of sub and supersolutions via Schauder's fixed point theorem.
Note that the function identically zero is a subsolution, and \(w\) is a supersolution by the functional assumption on \(\mu\).
We refer the reader to \citelist{\cite{Montenegro_Ponce:2008} \cite{Ponce:2012}*{Proposition~6.7} \cite{Ponce:2013}*{Chapter~19}} for the implementation of this strategy. 

\medskip
The class of measures for which the Dirichlet problem \eqref{eqDirichletProblem} has a solution is actually larger and includes all finite Borel measures \(\mu\) which are diffuse with respect to the \(W^{2, p}\) capacity. 
By diffuse we mean that for every compact set \(K \subset \Omega\) such that \(\capt_{W^{2, p}}{(K)} = 0\), we have \(\mu(K) = 0\).

\begin{corollary}
\label{corollaryExistenceDirichletProblemDiffuseMeasure}
Let \(\Omega \subset \R^N\) be a smooth bounded open set, \(p > 1\) and let \(V \in L^p(\Omega)\) be a nonnegative function.
For every finite Borel measure \(\mu\) which is diffuse with respect to the \(W^{2, p}\) capacity, the Dirichlet problem 
\[
\left\{
\begin{alignedat}{2}
- \Delta v + V v & = \mu	&& \quad \text{in \(\Omega\),}\\
v & = 0 && \quad \text{on \(\partial\Omega\),}
\end{alignedat}
\right.
\]
has a solution.
\end{corollary}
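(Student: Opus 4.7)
My plan is to reduce to the case $\mu \ge 0$ by a Jordan decomposition $\mu = \mu^+ - \mu^-$. Both $\mu^\pm$ inherit diffuseness with respect to the $W^{2,p}$ capacity, and by linearity of the Dirichlet problem it is enough to solve it separately for $\mu^+$ and $\mu^-$ and subtract. So I assume from now on that $\mu$ is a nonnegative finite Borel measure which is diffuse with respect to the $W^{2,p}$ capacity.

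The core of the proof is an approximation of $\mu$ from below by measures lying in the dual space that appears in Proposition~\ref{propositionExistenceDirichletProblem}. Specifically, I would invoke a decomposition/approximation theorem for diffuse measures (in the spirit of Boccardo--Gallouët--Orsina and its $W^{2,p}$ counterpart, see \cite{Ponce:2013}*{Chapter~18}) to produce a nondecreasing sequence $(\mu_n)_{n \in \N}$ of nonnegative finite Borel measures in $\bigl(W^{2,p}(\Omega) \cap W_0^{1,p}(\Omega)\bigr)'$ such that $\mu_n \uparrow \mu$ in total variation. A concrete path is to apply the Hahn--Banach construction sketched just before the proof of Proposition~\ref{propositionExistenceDirichletProblem} to truncations $\chi_{K_n}\mu$, where $(K_n)$ is an increasing exhaustion of $\mu$ by compacts of finite $W^{2,p}$-capacitary level; diffuseness is exactly the property that ensures that $\mu(\Omega \setminus \bigcup_n K_n) = 0$. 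This approximation step is the main obstacle of the proof.

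Granted $(\mu_n)$, I apply Proposition~\ref{propositionExistenceDirichletProblem} to obtain nonnegative solutions $v_n \in L^{p'}(\Omega)$ of the Dirichlet problem with datum $\mu_n$. The same Kato-inequality-plus-weak-maximum-principle argument that establishes monotonicity in the proof of Proposition~\ref{propositionExistenceDirichletProblem} applies to the difference $v_{n+1} - v_n$ and shows that the sequence $(v_n)$ is itself nondecreasing in $n$. For the uniform $L^1$ bound, I test the equation for $v_n$ against $\zeta \in C^\infty(\overline\Omega)$ solving $-\Delta \zeta = 1$ in $\Omega$ with $\zeta = 0$ on $\partial\Omega$, so that $\zeta$ is smooth and bounded, obtaining
\[
\int\limits_\Omega v_n + \int\limits_\Omega V\zeta\, v_n = \int\limits_\Omega \zeta \dif\mu_n \le \norm{\zeta}_{L^\infty(\Omega)} \norm{\mu},
\]
which bounds $(v_n)$ in $L^1(\Omega)$ uniformly in $n$. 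A parallel estimate obtained by testing against a nonnegative cutoff $\zeta_k \in C_0^\infty(\overline\Omega)$ increasing to $\chi_\Omega$ yields $\norm{Vv_n}_{L^1(\Omega)} \le \norm{\mu}$ uniformly, using the nonnegativity of $v_n$ and the sign of the boundary contribution.

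The monotone convergence theorem then gives $v_n \uparrow v$ pointwise and in $L^1(\Omega)$, with $Vv_n \uparrow Vv$ also in $L^1(\Omega)$, and $\mu_n \to \mu$ in total variation. Passing to the limit in the weak formulation
\[
\int\limits_\Omega v_n(-\Delta \zeta + V\zeta) = \int\limits_\Omega \zeta \dif\mu_n
\]
for each $\zeta \in C_0^\infty(\overline\Omega)$ yields that $v$ satisfies the Dirichlet problem \eqref{eqDirichletProblem} with datum $\mu$, completing the proof.
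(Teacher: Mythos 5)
Your overall strategy matches the paper's: reduce to \(\mu \ge 0\) via the Jordan decomposition, approximate by measures in \(\bigl( W^{2, p}(\Omega) \cap W_0^{1, p}(\Omega) \bigr)'\), solve each approximate problem via Proposition~\ref{propositionExistenceDirichletProblem}, and pass to the limit. The difference lies in the limiting mechanism. The paper takes a sequence \((\mu_i)\) converging to \(\mu\) strongly in total variation (not necessarily monotone) and uses the absorption estimate \eqref{eqAbsorptionEstimate}, applied by linearity to \(v_i - v_j\), to show that \((Vv_i)\), hence \((\Delta v_i)\), hence \((v_i)\) are Cauchy in the relevant norms. You instead require the approximating sequence to be nondecreasing, deduce monotonicity of \((v_n)\) from Kato's inequality and the weak maximum principle, and conclude by monotone convergence together with a uniform \(L^1\) bound. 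Both routes work: yours avoids the absorption estimate for differences but needs the stronger, monotone form of the approximation theorem; the paper's Cauchy argument is insensitive to signs and monotonicity. Your uniform bound \(\norm{Vv_n}_{L^1(\Omega)} \le \norm{\mu}\) is most cleanly justified by Proposition~\ref{propositionTestFunction} with \(\psi = 1\), which gives \(\int_\Omega \Delta v_n \le 0\) and hence \(\int_\Omega V v_n \le \mu_n(\Omega)\); the vague appeal to cutoffs \(\zeta_k \uparrow \chi_\Omega\) is shakier, since \(-\Delta\zeta_k\) is not controlled near \(\partial\Omega\).

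The one step that does not survive scrutiny is your ``concrete path'' for the approximation: restricting \(\mu\) to compacts \(K_n\) and invoking the Hahn--Banach construction does not place \(\chi_{K_n}\mu\) in the dual space. That construction produces \emph{some} positive measure carried by a compact of positive capacity and lying in the dual; it says nothing about the restriction of a prescribed diffuse measure. The correct exhaustion (Feyel--de la Pradelle, Dal Maso, Boccardo--Gallou\"et--Orsina) is by sets on which \(\mu\) is dominated by a capacitary functional, and this is genuinely the hard part of the corollary --- which is why the paper, like you, cites it rather than proves it. Provided you quote that theorem in its monotone form rather than attempt the restriction argument, your proof is complete.
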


In this case, \(V v \in L^1(\Omega)\) but it need not be true that \(v \in L^{p'}(\Omega)\).
The corollary above has a counterpart for potentials \(V \in L^1(\Omega)\) and for measures which are diffuse with respect to the \(W^{1, 2}\) capacity~\cite{Orsina_Ponce:2008}*{Theorem~1.2}.

We do not use this corollary in the sequel, so we only give a sketch of the proof.
This existence result follows from two main tools.
The first one concerns the absorption estimate,
\begin{equation}
\label{eqAbsorptionEstimate}
\norm{V v}_{L^1(\Omega)}
\le \abs{\mu}(\Omega)
\end{equation}
which can be obtained using as test function a suitable approximation of the sign function \(\sgn{v}\) \citelist{\cite{Brezis_Marcus_Ponce:2007}*{Proposition~4.B.3} \cite{Ponce:2012}*{Lemma~7.2} \cite{Ponce:2013}*{Chapter~20}}.
The second ingredient is a property of strong approximation of nonnegative measures which are diffuse with respect to the \(W^{2, p}\) capacity by nonnegative measures in \(\big( W^{2, p}(\Omega) \cap W_0^{1, p}(\Omega) \big)'\) \citelist{\cite{Feyel_DelaPradelle:1977} \cite{DalMaso:1983} \cite{Baras_Pierre:1984} \cite{Gallouet_Morel:1984} \cite{Boccardo_Gallouet_Orsina:1996}};
we refer the reader to \citelist{ \cite{Ponce:2012}*{Proposition~7.6} \cite{Ponce:2013}} for the complete argument.

\begin{proof}[Proof of Corollary~\ref{corollaryExistenceDirichletProblemDiffuseMeasure}]
Since the equation is linear and the measure \(\mu\) can be written as a difference of nonnegative diffuse measures --- for instance the positive and negative parts of \(\mu\) ---, we may assume without loss of generality that \(\mu\) is nonnegative.
By the property of strong approximation of diffuse measures, there exists a sequence \((\mu_i)_{i \in \N}\) in \(\big( W^{2, p}(\Omega) \cap W_0^{1, p}(\Omega) \big)'\) such that
\[
\lim_{i \to \infty}{\abs{\mu - \mu_i}(\Omega)} = 0.
\]
By Proposition~\ref{propositionExistenceDirichletProblem}, the Dirichlet problem with datum \(\mu_i\) has a solution \(v_i\).
By the absorption estimate \eqref{eqAbsorptionEstimate} and the strong convergence of the sequence of measures \((\mu_i)_{i \in \N}\), we deduce that \((V v_i)_{i \in \N}\) is a Cauchy sequence in \(L^1(\Omega)\).
Thus, the sequence of measures \((\Delta v_i)_{i \in \N}\) converges strongly in the sense of measures, whence  \((v_i)_{i \in \N}\) is a Cauchy sequence in \(L^1(\Omega)\) and converges strongly to a function \(v\).
In particular, the sequence \((V v_i)_{i \in \N}\) converges in \(L^1(\Omega)\) to the function \(V v\).
Therefore, \(v\) satisfies the Dirichlet problem with datum \(\mu\).
\end{proof}

\section{Choice of test functions}
\label{sectionChoiceTestFunctions}

In this section we explain how we can enlarge the class of nonnegative test functions used in the differential inequality \eqref{eqPDEInequality}: from \(C_c^\infty(\Omega)\) functions to solutions of a Dirichlet problem with measure data.
The first step consists in passing from test functions with compact support in \(\Omega\) to test functions merely vanishing on the boundary \(\partial\Omega\).
The main ingredient is the following:

\begin{proposition}
\label{propositionTestFunction}
Let \(\Omega \subset \R^N\) be a smooth bounded open set and let \(w \in W_0^{1, 1}(\Omega)\) be a function such that \(\Delta w\) is a finite Borel measure in \(\Omega\).
If \(w\) is nonnegative, then for every nonnegative function \(\psi \in C^\infty(\overline\Omega)\) we have
\[
\int\limits_\Omega \psi \Delta w
\le \int\limits_\Omega w \Delta \psi
.
\]
\end{proposition}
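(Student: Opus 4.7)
The claim amounts to justifying Green's identity with appropriate sign control on the boundary. If $w$ were in $C^2(\overline\Omega)$ with $w \ge 0$ and $w = 0$ on $\partial\Omega$, Green's second identity would give
\[
\int_\Omega \psi \Delta w - \int_\Omega w \Delta \psi = \int_{\partial\Omega} \psi \Nd{w} \dif\sigma,
\]
the $\int w \Nd{\psi}\dif\sigma$ term vanishing from $w = 0$ on $\partial\Omega$, and the boundary integral being nonpositive because $\psi \ge 0$ and the outward normal derivative of the nonnegative function $w$ at a boundary zero satisfies $\Nd{w} \le 0$ (Hopf). My plan is to recover this in the limit, via an exhaustion and a mollification.

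For small $\epsilon > 0$, set $\Omega_\epsilon := \{x \in \Omega : \mathrm{dist}(x,\partial\Omega) > \epsilon\}$, which is smooth since $\partial\Omega$ is. Extend $w$ by zero to $\bar w \in W^{1,1}(\R^N)$ and let $w_\delta := \bar w * \rho_\delta$: this is smooth and nonnegative, converges to $w$ in $W^{1,1}(\Omega_\epsilon)$, and satisfies $\Delta w_\delta = (\Delta w) * \rho_\delta$ in $\Omega_{2\delta}$. The classical Green identity on $\Omega_\epsilon$ applied to $w_\delta, \psi$ passes to the limit as $\delta \to 0$ for a.e. $\epsilon$ (so that $\nabla w$ admits an $L^1$ trace on $\partial\Omega_\epsilon$, by Fubini), yielding
\[
\int_{\Omega_\epsilon} \psi \dif\mu = \int_{\Omega_\epsilon} w \Delta \psi \dif x + \int_{\partial\Omega_\epsilon} \Big(\psi \Nd{w} - w \Nd{\psi}\Big) \dif\sigma,
\]
with $\mu = \Delta w$ and the normal outward to $\Omega_\epsilon$. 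As $\epsilon \to 0$ the interior integrals converge to $\int_\Omega \psi \dif\mu$ and $\int_\Omega w\Delta \psi \dif x$, and $\int_{\partial\Omega_\epsilon} w \Nd{\psi}\dif\sigma \to 0$ because $w$ has zero trace on $\partial\Omega$.

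The main obstacle is controlling the remaining boundary integral $I(\epsilon) := \int_{\partial\Omega_\epsilon} \psi \Nd{w}\dif\sigma$, and this is where the hypothesis $w \ge 0$ enters. Since the outward normal to $\Omega_\epsilon$ equals $-\nabla d$ (with $d(x) = \mathrm{dist}(x,\partial\Omega)$), the coarea formula gives $\int_0^{\epsilon_0} I(\epsilon)\dif\epsilon = -\int_{\{d < \epsilon_0\}} \psi \nabla d \cdot \nabla w \dif x$, and integration by parts using the zero trace of $w$ on $\partial\Omega$ rewrites the right-hand side as
\[
\int_{\{d < \epsilon_0\}} w \big(\nabla\psi \cdot \nabla d + \psi \Delta d\big) \dif x - \int_{\partial\Omega_{\epsilon_0}} w \psi \dif\sigma.
\]
The second term is $\ge 0$ since $w,\psi \ge 0$, and the first is $o(\epsilon_0)$, since $\int_{\{d < \epsilon_0\}} w \dif x \le \epsilon_0 \int_{\{d < \epsilon_0\}} \abs{\nabla w} \dif x = o(\epsilon_0)$ (fundamental theorem along normal rays combined with $\abs{\nabla w} \in L^1(\Omega)$). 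Hence $\limsup_{\epsilon_0 \to 0} \epsilon_0^{-1} \int_0^{\epsilon_0} I \dif\epsilon \le 0$, and by Lebesgue differentiation $\liminf_{\epsilon \to 0} I(\epsilon) \le 0$. Choosing a sequence $\epsilon_k \to 0$ along which $I(\epsilon_k)$ has nonpositive limit and the preceding identity is valid, we pass to the limit and obtain $\int_\Omega \psi \dif\mu \le \int_\Omega w \Delta \psi \dif x$, as required.
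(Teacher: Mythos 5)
Your argument is correct in substance, but it takes a genuinely different route from the paper. The paper extends \(w\) by zero to \(\overline w\) on \(\R^N\), uses the structure theorem for \(\Delta\overline w\) (interior part plus a measure \(\nu\) concentrated on \(\partial\Omega\), absolutely continuous with respect to \(\cH^{N-1}\lfloor_{\partial\Omega}\)), and then invokes the de~la~Vallée-Poussin--Brelot property — the diffuse part of \(\Delta\overline w\) is nonnegative on the set where the precise representative of \(\overline w\) attains its minimum — to conclude \(\nu \ge 0\), hence \(\int_{\partial\Omega}\psi\dif\nu \ge 0\). You avoid this potential-theoretic input entirely by never touching \(\partial\Omega\): you work on the inner parallel domains \(\Omega_\epsilon\), where only the interior measure \(\Delta w\) is seen by the mollification, and you extract the sign of the flux \(I(\epsilon)=\int_{\partial\Omega_\epsilon}\psi\,\partial w/\partial n\) from its \emph{average} over \(\epsilon\), via the coarea formula and one integration by parts in the collar; the nonnegativity of \(w\) enters through the term \(-\int_{\partial\Omega_{\epsilon_0}}w\psi\dif\sigma \le 0\). (Note the sign here: that term is \emph{nonpositive}, which is what your \(\limsup\) bound requires; your sentence asserting it is \(\ge 0\) presumably refers to the integral before the minus sign.) Two presentational points are worth tightening. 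First, the cleanest way to conclude is to average the \emph{entire} limiting Green identity over \(\epsilon\in(0,\epsilon_0)\) and let \(\epsilon_0\to 0\), rather than extracting a sequence \(\epsilon_k\) for \(I\) alone — although this is harmless here since the trace estimate \(\int_{\partial\Omega_\epsilon}\abs{w}\dif\sigma \le C\int_{\{d<\epsilon\}}\abs{\nabla w}\to 0\) gives a genuine limit for the \(w\,\partial\psi/\partial n\) term. Second, the passages "for a.e.\ \(\epsilon\)" conceal the usual subsequence extraction in \(\delta\) so that traces of \(\nabla w_\delta\) converge on \(\partial\Omega_\epsilon\) and \(\abs{\Delta w}(\partial\Omega_\epsilon)=0\); this is routine but should be said. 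On balance, your proof is more elementary and self-contained, at the cost of the collar bookkeeping; the paper's proof is shorter but leans on a nontrivial fine-potential-theory result about \(\Delta\overline w\).
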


The integral in the left-hand side is to be understood as the integration of \(w\) with respect to the measure \(\Delta w\); we avoid the notation \(\dif(\Delta w)\).
In the proof of Theorem~\ref{theoremStrongMaximumPrinciple}, we choose as \(\psi\) a regularized version of \(u\) via convolution.

Observe that if \(w \in C_0^\infty(\overline\Omega)\), then by the Divergence theorem we have, for every \(\psi \in C^\infty(\overline\Omega)\),
\[
\int\limits_\Omega w \Delta \psi
= \int\limits_\Omega \psi \Delta w - \int\limits_{\partial\Omega} \frac{\partial w}{\partial n} \psi,
\]
where \(n\) denotes the exterior normal derivative on \(\partial\Omega\).
When \(w\) and \(\psi\) are both nonnegative, the integrand on the boundary \(\partial\Omega\) is nonpositive and we get the inequality.
For \(w\) as in the statement of the proposition, we rigorously justify this argument by studying an extension of \(w\) to \(\R^N\).

\begin{proof}[Proof of Proposition~\ref{propositionTestFunction}]
Consider the extension \(\overline w : \R^N \to \R\) defined by
\[
\overline w(x) =
\begin{cases}
w(x)	& \text{if \(x \in \Omega\),}\\
0		& \text{if \(x \in \R^N \setminus \Omega\).}
\end{cases}
\]
Since \(w \in W_0^{1, 1}(\Omega)\) and \(\Delta w\) is a finite Borel measure in \(\Omega\), one shows that \citelist{\cite{Brezis_Ponce:2008}*{Proposition~4.2} \cite{Ponce:2013}*{Chapter~10}} (1) \(\Delta \overline w\) is a finite Borel measure in \(\R^N\) supported in \(\overline\Omega\), and (2) there exists a measure \(\nu\) supported in \(\partial\Omega\) such that, for every Borel set \(A \subset \R^N\), we have
\[
\Delta\overline w(A)
= \Delta w(A \cap \Omega) + \nu(A \cap \partial\Omega).
\]
Hence, using any smooth extension \(\widetilde\psi\) of \(\psi\) with compact support in \(\R^N\), we get
\[
\int\limits_\Omega w \Delta \psi
= \int\limits_{\R^N} \overline w \Delta \widetilde\psi
= \int\limits_{\R^N} \widetilde\psi \Delta \overline w
= \int\limits_{\Omega} \psi \Delta w + \int\limits_{\partial\Omega} \psi \dif\nu.
\]
To conclude, we need a property discovered by de~la~Vall\'ee-Poussin~\cite{DLVP:1938} and generalized by Brelot~\cite{Brelot:1951}.
It says that the diffuse part of the measure \(\Delta \overline w\) with respect to the \(W^{1, 2}\) capacity is nonnegative on the minimum set of the precise representative of \(\overline w\) \citelist{\cite{Brezis_Ponce:2004}*{Corollary~1.3} \cite{Ponce:2013}*{Chapter~6}}.
In our case, the measure \(\nu\) is absolutely continuous with respect to the Haudorff measure \(\cH^{N-1}\lfloor_{\partial\Omega}\) \citelist{\cite{Ancona:2009} \cite{Brezis_Ponce:2008}*{Proposition~4.2} \cite{Ponce:2013}*{Chapter~10}};
in particular \(\nu\) is diffuse with respect to the \(W^{1, 2}\) capacity.
Since \(w\) is nonnegative and has zero trace on \(\partial\Omega\), \(\nu\) is supported in the set where \(\overline w\) achieves its minimum, whence by the de~la~Vallée Poussin property \(\nu\) is nonnegative and
\[
\int\limits_{\partial\Omega} \psi \dif\nu \ge 0
.
\]
The conclusion follows.
\end{proof}

The second step consists in constructing \emph{nonnegative} solutions \(w\) of a Dirichlet problem involving the Schrödinger operator \(-\Delta + V\) in such a way that \(-\Delta w + Vw\) is nonnegative in a prescribed set; in the context of Theorem~\ref{theoremStrongMaximumPrinciple}, a subset where \(u\) vanishes.

\begin{proposition}
\label{propositionComparisonSolutions}
Let \(\Omega \subset \R^N\) be a smooth bounded open set and let \(V \in L^1(\Omega)\) be a nonnegative function.
If \(\mu\) is a positive finite Borel measure in \(\Omega\) such that there exists a function \(v\) satisfying the Dirichlet problem
\[
\left\{
\begin{alignedat}{2}
- \Delta v + V v & = \mu	&& \quad \text{in \(\Omega\),}\\
v & = 0 && \quad \text{on \(\partial\Omega\),}
\end{alignedat}
\right.
\]
then there exists \(C > 0\) such that for every \(\epsilon > 0\) the solution \(v_\epsilon\) of the Dirichlet problem
\[
\left\{
\begin{alignedat}{2}
- \Delta v_\epsilon + V v_\epsilon & = \chi_{\{v > \epsilon\}}	&& \quad \text{in \(\Omega\),}\\
v_\epsilon & = 0 && \quad \text{on \(\partial\Omega\),}
\end{alignedat}
\right.
\]
satisfies
\(\epsilon v_\epsilon \le C v\) almost everywhere in \(\Omega\).
\end{proposition}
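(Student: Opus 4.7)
My plan is to apply Kato's inequality (Lemma~\ref{lemmaKato}) at the level $\kappa = 0$ to the function $w := Cv - \epsilon v_\epsilon$, and then invoke the weak maximum principle, after choosing $C$ to be a uniform $L^\infty$ upper bound for $v_\epsilon$ independent of $\epsilon$. The key combinatorial observation is that on the set $\{\epsilon v_\epsilon > Cv\} \cap \{v > \epsilon\}$ one automatically has $v_\epsilon > C$; so once $C \geq \norm{v_\epsilon}_{L^\infty(\Omega)}$ uniformly in $\epsilon$, this bad set becomes Lebesgue-null.

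\textbf{Step 1: uniform $L^\infty$ bound on $v_\epsilon$.} Arguing as in the proof of Proposition~\ref{propositionExistenceDirichletProblem}, Kato's inequality first gives $v_\epsilon \geq 0$ a.e. Since $V \geq 0$ and $\chi_{\{v > \epsilon\}} \leq 1$, this yields $-\Delta v_\epsilon \leq 1$ in the sense of distributions. Letting $z$ denote the classical solution of $-\Delta z = 1$ in $\Omega$ with $z = 0$ on $\partial\Omega$, the weak maximum principle (Lemma~\ref{lemmaWeakMaximumPrinciple}) gives $v_\epsilon \leq z$ a.e.\ for every $\epsilon > 0$. Since $\Omega$ is smooth and bounded, $z \in L^\infty(\Omega)$, and I would set $C := \norm{z}_{L^\infty(\Omega)}$.

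\textbf{Step 2: Kato's inequality on $w$ and conclusion.} With this choice of $C$, the function $w = Cv - \epsilon v_\epsilon \in W_0^{1, 1}(\Omega)$ satisfies, in the sense of distributions,
\[
\Delta w = V w - C \mu + \epsilon \chi_{\{v > \epsilon\}} \leq V w + \epsilon \chi_{\{v > \epsilon\}} =: f,
\]
where $\mu \geq 0$ lets us discard the measure-valued term $-C\mu$. Here $f \in L^1(\Omega)$, since $V v, V v_\epsilon \in L^1(\Omega)$ by the definition of solution. Lemma~\ref{lemmaKato} with $\kappa = 0$ then gives $\Delta \min\{w, 0\} \leq \chi_{\{w < 0\}} f$, and since $\min\{w, 0\} = -(\epsilon v_\epsilon - Cv)^+$ and $\chi_{\{w < 0\}} V w = -V (\epsilon v_\epsilon - Cv)^+$, this rewrites as
\[
-\Delta (\epsilon v_\epsilon - Cv)^+ + V (\epsilon v_\epsilon - Cv)^+ \leq \epsilon\, \chi_{\{\epsilon v_\epsilon > Cv\} \cap \{v > \epsilon\}}
\]
in the sense of distributions. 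On the indicator set we have $C \epsilon \leq Cv < \epsilon v_\epsilon$, hence $v_\epsilon > C$; by Step~1 this set has Lebesgue measure zero, so the right-hand side vanishes a.e. The weak maximum principle would then force the nonnegative $W_0^{1, 1}$-function $(\epsilon v_\epsilon - Cv)^+$ to vanish identically, which is the desired inequality $\epsilon v_\epsilon \leq Cv$ a.e.

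The main subtlety is that $\Delta w$ carries the measure-valued term $-C\mu$, so the $L^1$-form of Kato's inequality stated in Lemma~\ref{lemmaKato} does not apply to $\Delta w$ directly. The way around this is the favorable sign of $-C\mu$: the distributional inequality $\Delta w \leq f$ is valid with $f \in L^1(\Omega)$, which is exactly what Lemma~\ref{lemmaKato} requires, so no appeal to measure-data refinements of Kato's inequality is necessary.
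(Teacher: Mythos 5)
Your proposal is correct and follows essentially the same route as the paper: the same comparison with the solution of $-\Delta z=1$ to get the uniform bound and the constant $C$, the same application of Kato's inequality at level $0$ to $Cv-\epsilon v_\epsilon$ after discarding $-C\mu\le 0$, and the same observation that $\{Cv<\epsilon v_\epsilon\}\cap\{v>\epsilon\}$ is negligible. The only cosmetic differences are that you derive $v_\epsilon\le z$ directly from $-\Delta v_\epsilon\le 1$ (the paper runs Kato once more on $\zeta-v_\epsilon$) and that you conclude the bad set is Lebesgue-null rather than empty, which suffices.
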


The existence of \(v_{\epsilon}\), for every \(\epsilon > 0\), is obtained for example by minimization of the functional
\[{}
E(u)
= \frac{1}{2} \int\limits_{\Omega} (\abs{\nabla u}^{2} + Vu^{2}) - \int\limits_{\Omega} f u
\]
in \(W_{0}^{1, 2}(\Omega)\) with bounded function \(f = \chi_{\{v > \epsilon\}}\).
In the proof of this proposition we need the following weak maximum principle adapted to solutions of the Dirichlet problem in the weak sense~\citelist{\cite{Brezis_Marcus_Ponce:2007}*{Proposition~4.B.1} \cite{Ponce:2012}*{Corollary~4.5 and Proposition~5.1} \cite{Ponce:2013}*{Chapter~6}}:

\begin{lemma}
\label{lemmaWeakMaximumPrinciple}
Let \(\Omega \subset \R^N\) be a smooth bounded open set.
If \(v \in W_0^{1, 1}(\Omega)\) is such that
\[
\Delta v \le 0
\quad \text{in the sense of distributions in \(\Omega\),}
\]
then \(v \ge 0\) almost everywhere in \(\Omega\).
\end{lemma}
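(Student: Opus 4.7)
The plan is to reduce the weak maximum principle to a duality argument: test the distributional inequality \(\Delta v \le 0\) against nonnegative smooth functions vanishing on \(\partial\Omega\), and then choose such test functions as solutions of auxiliary Dirichlet problems with prescribed nonnegative data.

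First I would enlarge the admissible class of test functions from \(C_c^\infty(\Omega)\) to
\[
\mathcal{T} = \{\zeta \in C^\infty(\overline\Omega) : \zeta = 0 \text{ on } \partial\Omega \text{ and } \zeta \ge 0\},
\]
by proving that \(\int_\Omega v \Delta \zeta \le 0\) for every \(\zeta \in \mathcal{T}\). Fix a smooth cutoff \(\chi : \R \to [0, 1]\) equal to \(0\) on \((-\infty, 1]\) and to \(1\) on \([2, +\infty)\), and set \(\eta_\delta(x) = \chi(d(x)/\delta)\) where \(d(x) = \mathrm{dist}(x, \partial\Omega)\). For small \(\delta > 0\), \(\eta_\delta \zeta \in C_c^\infty(\Omega)\), so the hypothesis yields
\[
\int_\Omega v \eta_\delta \Delta \zeta + 2 \int_\Omega v \nabla \eta_\delta \cdot \nabla \zeta + \int_\Omega v \zeta \Delta \eta_\delta \le 0.
\]
As \(\delta \to 0\) the first term converges to \(\int_\Omega v \Delta \zeta\) by dominated convergence. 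The remaining two terms are supported in the strip \(S_\delta = \{d < 2\delta\}\), where \(|\nabla \eta_\delta| = O(1/\delta)\) and \(|\Delta \eta_\delta| = O(1/\delta^2)\). Since \(\zeta\) is smooth and vanishes on \(\partial\Omega\), one has \(\zeta = O(d)\) near \(\partial\Omega\); combined with the Hardy inequality in \(W_0^{1,1}(\Omega)\) for smooth \(\Omega\), which guarantees that \(|v|/d \in L^1(\Omega)\), both error terms are bounded by \(C \int_{S_\delta} |v|/d\), which tends to zero by dominated convergence.

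Second, I would dualize. Given \(g \in C_c^\infty(\Omega)\) with \(g \ge 0\), let \(\zeta\) be the classical solution of \(-\Delta \zeta = g\) in \(\Omega\) with \(\zeta = 0\) on \(\partial\Omega\). Schauder regularity gives \(\zeta \in C^\infty(\overline\Omega)\), and the classical maximum principle for smooth superharmonic functions gives \(\zeta \ge 0\), so \(\zeta \in \mathcal{T}\). Applying the extended inequality,
\[
0 \ge \int_\Omega v \Delta \zeta = -\int_\Omega v g,
\]
hence \(\int_\Omega v g \ge 0\) for every such \(g\), and \(v \ge 0\) almost everywhere in \(\Omega\).

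The hard part is the second-order error term \(\int_\Omega v \zeta \Delta \eta_\delta\) in the first step. Since \(|\Delta \eta_\delta| = O(1/\delta^2)\) on a strip of width \(O(\delta)\), vanishing requires two compensating factors of \(\delta\) — one extracted from \(\zeta = O(d)\) near \(\partial\Omega\), the other from Hardy's inequality applied to \(v \in W_0^{1,1}(\Omega)\). The smoothness assumption on \(\Omega\) enters precisely through the validity of this Hardy estimate.
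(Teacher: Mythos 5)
Your overall architecture --- first enlarging the class of test functions from \(C_c^\infty(\Omega)\) to nonnegative functions in \(C^\infty(\overline\Omega)\) vanishing on \(\partial\Omega\), then dualizing against the solution of \(-\Delta\zeta = g\) --- is exactly the route the paper sketches, and your second (duality) step is correct. The gap is in the justification of the boundary error terms in the first step: you invoke ``the Hardy inequality in \(W_0^{1,1}(\Omega)\)'' to conclude that \(\abs{v}/d \in L^1(\Omega)\), but Hardy's inequality fails at the endpoint \(p = 1\). For instance, in one dimension the function \(v(t) = 1/\log(1/t)\) near \(t = 0\) (suitably extended) belongs to \(W_0^{1,1}\), since \(v'(t) = 1/(t \log^2(1/t))\) is integrable and \(v\) is continuous with \(v(0) = 0\), and yet \(\int_0 v(t)/t \dif t = \int_0 \dif t/(t\log(1/t)) = +\infty\); a product or radial version of this example lives in a smooth bounded \(\Omega \subset \R^N\). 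So \(\abs{v}/d\) need not be integrable, and your appeal to dominated convergence has no dominating function. The final sentence of your proposal, asserting that the smoothness of \(\Omega\) enters ``precisely through the validity of this Hardy estimate,'' therefore rests on a false inequality.

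The gap is repairable, in two ways. First, note that your error terms are supported in the sub-strip \(\{\delta \le d \le 2\delta\}\), where \(\abs{\nabla\eta_\delta} \le C/\delta\), \(\abs{\Delta\eta_\delta} \le C/\delta^2\) and \(\zeta \le C\delta\); both terms are thus bounded by \(\frac{C}{\delta}\int_{\{d \le 2\delta\}} \abs{v}\). What you actually need is therefore not Hardy's inequality but the weaker --- and true --- vanishing-trace property \(\lim_{\delta \to 0} \frac{1}{\delta}\int_{\{d < 2\delta\}} \abs{v} = 0\), valid for \(v \in W_0^{1,1}(\Omega)\) in a smooth domain (prove it for \(v \in C_c^\infty(\Omega)\) by integrating \(\abs{\nabla v}\) along normal segments, then pass to the limit in \(W^{1,1}\)). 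Second, and closer to the paper's argument: since \(v \in W_0^{1,1}(\Omega)\), integrate by parts once and extend instead the inequality \(\int_\Omega \nabla v \cdot \nabla(\eta_\delta\zeta) \ge 0\). The only error term is then \(\int_\Omega \zeta\, \nabla v \cdot \nabla\eta_\delta\), whose integrand is bounded by \(C\abs{\nabla v}\) (the single factor \(1/\delta\) from \(\nabla\eta_\delta\) is cancelled by \(\zeta = O(d)\)) and tends to zero pointwise; dominated convergence then applies with \(\abs{\nabla v} \in L^1(\Omega)\) as the majorant, and no second-order cancellation is required. This yields \(-\int_\Omega v\Delta\zeta = \int_\Omega \nabla v \cdot \nabla\zeta \ge 0\) for every nonnegative \(\zeta \in C_0^\infty(\overline\Omega)\), which is the paper's displayed identity, after which your duality step concludes.
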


The proof of this lemma is based  on an approximation of functions in \(C_0^\infty(\overline\Omega)\) by functions in \(C_c^\infty(\Omega)\).
One deduces that for every nonnegative function \(\zeta \in C_0^\infty(\overline\Omega)\),
\[
- \int\limits_\Omega v \Delta\zeta
= \int\limits_\Omega \nabla v \cdot \nabla \zeta \ge 0,
\]
which implies that \(v\) is nonnegative.

\begin{proof}[Proof of Proposition~\ref{propositionComparisonSolutions}]
We first observe that the family \((v_\epsilon)_{\epsilon > 0}\) is uniformly bounded. 
More precisely, we show that for every \(\epsilon > 0\) we have
\[
v_\epsilon \le \zeta
\quad \text{in \(\Omega\)},
\]
where \(\zeta\) is the solution of the Dirichlet problem
\[
\left\{
\begin{alignedat}{2}
- \Delta \zeta & = 1	&& \quad \text{in \(\Omega\),}\\
\zeta & = 0 && \quad \text{on \(\partial\Omega\).}
\end{alignedat}
\right.
\]
Note that \(\zeta\) is a supersolution of the equation satisfied by \(v_\epsilon\) since
\[
-\Delta \zeta + V \zeta \ge -\Delta \zeta = 1 \ge \chi_{\{v > \epsilon\}}
\]
in the sense of distributions in \(\Omega\).
Then, by Kato's inequality (Lemma~\ref{lemmaKato}), we have
\[
\Delta \min{\{\zeta - v_\epsilon, 0\}} 
\le \chi_{\{\zeta < v_\epsilon\}} V(\zeta - v_\epsilon).
\]
By nonnegativity of \(V\), we deduce that
\[
\Delta \min{\{\zeta - v_\epsilon, 0\}} 
\le 0
\quad \text{in the sense of distributions in \(\Omega\).}
\]
The weak maximum principle (Lemma~\ref{lemmaWeakMaximumPrinciple}) gives \(\min{\{\zeta - v_\epsilon, 0\}} \ge 0\) almost everywhere in \(\Omega\), whence \(v_\epsilon \le \zeta
\).

\smallskip
We claim that 
\[
\epsilon v_\epsilon \le C v \quad \text{in \(\Omega\),}
\]
where the constant \(C > 0\) is such that, for every \(x \in \overline\Omega\),
\[
\zeta(x) \le C.
\] 
Firstly, since
\[
\Delta(C v - \epsilon v_\epsilon)
\le V(Cv - \epsilon v_\epsilon) + \epsilon \chi_{\{v > \epsilon\}}
\]
we have, by Kato's inequality (Lemma~\ref{lemmaKato}) and  by nonnegativity of \(V\),
\[
\begin{split}
\Delta \min{\big\{C v - \epsilon v_\epsilon, 0 \big\}}
& \le \chi_{\{Cv < \epsilon v_\epsilon\}} \big[ V(Cv - \epsilon v_\epsilon) + \epsilon \chi_{\{v > \epsilon\}}  \big]\\
& \le \epsilon \chi_{\{Cv < \epsilon v_\epsilon\}}  \chi_{\{v > \epsilon\}}
\end{split}
\]
in the sense of distributions in \(\Omega\).
By the choice of the constant \(C\), for every \(x \in \Omega\) such that \(v(x) > \epsilon\) we have
\[
\epsilon v_\epsilon(x) 
\le \epsilon \zeta(x) 
\le C \epsilon
\le C v(x).
\]
Hence,
\[
\{Cv < \epsilon v_\epsilon\} \cap \{v > \epsilon\} = \emptyset.
\]
Thus,
\[
\Delta \min{\big\{C v - \epsilon v_\epsilon, 0 \big\}}
\le 0
\quad \text{in the sense of distributions in \(\Omega\).}
\]
From the weak maximum principle (Lemma~\ref{lemmaWeakMaximumPrinciple}) we deduce that 
\[
\min{\big\{C v - \epsilon v_\epsilon, 0 \big\}} \ge 0
\]
and the proposition follows. 
\end{proof}

\section{Proof of Theorem~\ref{theoremStrongMaximumPrinciple}}

Let \(\omega \Subset \Omega\) be a smooth open connected set containing a compact subset \(K \subset \Omega\) with positive \(W^{2, p}\) capacity such that, for every \(x \in K\),
\[
\lim_{r \to 0}{\fint\limits_{B(x; r)} u} = 0.
\]
By the Hahn-Banach theorem, there exists a positive finite Borel measure \(\mu\) supported in \(K\) such that \(\mu \in \big( W^{2, p}(\omega) \cap W_0^{1, p}(\omega) \big)'\).
Let \(C > 0\) be a constant given by Proposition~\ref{propositionComparisonSolutions} such that for every \(\epsilon > 0\),
\[
\epsilon v_\epsilon \le C v
\quad \text{almost everywhere in \(\omega\)},
\]
where \(v\) and \(v_\epsilon\) are the solutions of the Dirichlet problem in the statement of the proposition with \(\Omega\) replaced by \(\omega\).
The assumption \(V \in L^p(\Omega)\) guarantees the existence of \(v\) and \(v_\epsilon\)  in \(L^{p'}(\omega)\) in view of Proposition~\ref{propositionExistenceDirichletProblem}.

Given a sequence of positive numbers \((\kappa_i)_{i \in \N}\) converging to zero and given a nonnegative function \(\rho \in C_c^\infty(\R^N)\) such that \(\int_{\R^N} \rho = 1\), let \((\rho_i)_{i \in \N}\) be the sequence of mollifiers defined by
\[
\rho_i(x)
= \frac{1}{\kappa_i^N} \rho\big( \tfrac{x}{\kappa_i} \big).
\] 
If \(\kappa_i\) is sufficiently small, then we have \(\diam{(\supp{\rho_i})} \le d(\omega, \partial\Omega)\).
In this case,
\[
\Delta (\rho_i * u)
= \rho_i * \Delta u
\] 
pointwisely in \(\omega\).
Since the function \(\rho_i * u \in C^\infty(\overline\omega)\) is nonnegative and \(C v - \epsilon v_\epsilon\) is also a nonnegative function in \(W_0^{1, 1}(\omega)\) such that 
\(\Delta (C v - \epsilon v_\epsilon)\) is a finite Borel measure in \(\omega\), by Proposition~\ref{propositionTestFunction} we have
\begin{equation}
\label{eqInequalityIntegral}
\int\limits_{\omega} (\rho_i * u) \Delta (Cv - \epsilon v_\epsilon)
\le
\int\limits_{\omega} (Cv - \epsilon v_\epsilon) \Delta (\rho_i * u).
\end{equation}
We now study the limits of the left and right-hand sides as \(i\) tends to infinity.
For this purpose, we first consider the case where \(u\) is a bounded function,
\[
u \in L^\infty(\Omega).
\]
By the differential inequality satisfied by \(u\),
\[
\Delta (\rho_i * u)
= \rho_i * \Delta u
\le \rho_i * (Vu).
\] 
We are assuming that \(u \in L^\infty(\Omega)\), whence the sequence \((\rho_i * (Vu))_{i \in \N}\) converges to \(Vu\) in \(L^p(\omega)\).
Since \(Cv - \epsilon v_\epsilon \in L^{p'}(\Omega)\), we then have
\begin{equation}
\label{eqLimitRHS}
\begin{split}
\limsup_{i \to \infty}{\int\limits_{\omega} (Cv - \epsilon v_\epsilon) \Delta (\rho_i * u) }
& \le
\lim_{i \to \infty}{\int\limits_{\omega} (Cv - \epsilon v_\epsilon) \rho_i * (Vu)}\\
& = \int\limits_{\omega} (Cv - \epsilon v_\epsilon) V u.
\end{split}
\end{equation}
On the other hand, by the equations satisfied by \(v\) and \(v_\epsilon\) we have
\[
\int\limits_{\omega} (\rho_i * u) \Delta (Cv - \epsilon v_\epsilon)
= \int\limits_{\omega} (\rho_i * u) \big[V(Cv - \epsilon v_\epsilon) + \epsilon \chi_{\{v > \epsilon\}} \big] - C \int\limits_{\omega} (\rho_i * u) \dif \mu.
\]
Since \(u \in L^\infty(\Omega)\),
\[
\lim_{i \to \infty}{\int\limits_{\omega} (\rho_i * u) \big[V(Cv - \epsilon v_\epsilon) + \epsilon \chi_{\{v > \epsilon\}} \big]}
= \int\limits_{\omega} u \big[V(Cv - \epsilon v_\epsilon) + \epsilon \chi_{\{v > \epsilon\}} \big].
\]
By assumption, the average integral of \(u\) on balls converges pointwisely to zero in the support of \(\mu\), whence the same is true for the sequence of convolutions \((\rho_i * u)_{i \in \N}\).
Since we are assuming that \(u \in L^\infty(\Omega)\), by the Dominated convergence theorem we have
\[
\lim_{i \to \infty}{\int\limits_{\omega} (\rho_i * u) \dif \mu} = 0.
\] 
Hence,
\begin{equation}
\label{eqLimitLHS}
\lim_{i \to \infty}{\int\limits_{\omega} (\rho_i * u) \Delta (Cv - \epsilon v_\epsilon)
}
= \int\limits_{\omega} u \big[V(Cv - \epsilon v_\epsilon) + \epsilon \chi_{\{v > \epsilon\}} \big].
\end{equation}
Therefore, as \(i\) tends to infinity in \eqref{eqInequalityIntegral}, it follows from the limits \eqref{eqLimitRHS} and \eqref{eqLimitLHS} that
\[
\int\limits_{\omega} u \big[V(Cv - \epsilon v_\epsilon) + \epsilon \chi_{\{v > \epsilon\}} \big]
\le \int\limits_{\omega} (Cv - \epsilon v_\epsilon) V u.
\]
Simplifying the common term on both sides,  we get
\[
\epsilon \int\limits_{\omega} u \chi_{\{v > \epsilon\}} 
\le
0.
\]
Thus, dividing both sides by \(\epsilon\) and letting \(\epsilon\) tend to zero, we get
\[
\int\limits_{\{v > 0\}} u 
\le
0.
\]
Since by the strong maximum principle involving the Lebesgue measure (Proposition~\ref{propositionStrongMaximumPrincipleLebesgue})  the set \(\{v = 0\}\) is negligible, and since \(u\) is nonnegative, we deduce that \(u = 0\) almost everywhere in \(\omega\).
Since the domain \(\Omega\) can be covered by the sets \(\omega\), we get the conclusion when \(u \in L^\infty(\Omega)\).

We may now remove this restriction on \(u\) by observing that, by Kato's inequality (Lemma~\ref{lemmaKato}), for every \(\kappa > 0\) the function \(\min{\{u, \kappa\}}\) satisfies the same differential inequality as \(u\):
\[
- \Delta \min{\{u, \kappa\}} + V \min{\{u, \kappa\}} \ge 0
\]
in the sense of distributions in \(\Omega\).
Moreover, since \(0 \le \min{\{u, \kappa\}} \le u \), the assumption on the limit of the average integral of \(\min{\{u, \kappa\}}\) is satisfied.
By the previous case, 
\[
\min{\{u, \kappa\}} = 0 \quad \text{almost everywhere in \(\Omega\),}
\]
whence \(u = 0\) almost everywhere in \(\Omega\).
The proof of the theorem is complete.
\qed

\section{Prescribing the level set $\{u = 0\}$}
\label{sectionConverse}

In this section, we investigate the role played by the \(W^{2, p}\) capacity in the strong maximum principle by proving the following converse of Theorem~\ref{theoremStrongMaximumPrinciple}. 
Later on, we consider the counterpart of the case \(p = 1\) in terms of the \(W^{1, 2}\) capacity.

\begin{proposition}
\label{propositionExample}
Let \(\Omega \subset \R^N\) be an open set and \(p > 1\).
For every compact set \(K \subset \Omega\) with zero \(W^{2, p}\) capacity there exist a nonnegative function \(u \in C^\infty(\overline\Omega)\) and  \(V \in L^p(\Omega)\) such that
\[
K = \{x \in \overline{\Omega} : u(x) = 0\}, 
\]
and the equation
\[
- \Delta u + V u = 0	
\]
is satisfied pointwisely and in the sense of distributions in \(\Omega\).
\end{proposition}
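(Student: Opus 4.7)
The strategy follows de la Vallée Poussin's perspective, realizing the compact set \(K\) as the blow-up locus of a nonnegative function with finite \(W^{2,p}\) energy, and then defining \(u\) as a rapidly decaying smooth function of that auxiliary function. Using \(\capt_{W^{2,p}}(K) = 0\) and the compactness of \(K\), for each \(n \in \N\) I select \(\varphi_n \in C_c^\infty(\Omega)\), nonnegative, with \(\varphi_n \geq 2^n\) on an open neighborhood of \(K\), \(\|\varphi_n\|_{W^{2,p}(\Omega)} \leq 2^{-n}\), and \(\supp \varphi_n\) contained in an \(r_n\)-neighborhood of \(K\) with \(r_n \downarrow 0\) rapidly (obtained by scaling a capacity-realizing function for \(K\) and truncating with a suitable cutoff subordinated to a decreasing family of neighborhoods). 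Setting \(w := \sum_n \varphi_n\), the series converges in \(W^{2,p}(\Omega)\); off \(K\) the sum is locally finite, so \(w \in C^\infty(\Omega \setminus K)\); and by construction the precise representative of \(w\) equals \(+\infty\) precisely on \(K\), with a rate of blow-up that can be tuned to beat any power of \(1/\mathrm{dist}(\cdot, K)\).

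\textbf{Definition of \(u\) and \(V\).} Let \(u := 1/(1 + w)\) on \(\Omega \setminus K\) and \(u := 0\) on \(K\). The super-polynomial growth of \(w\) at \(K\) ensures that \(u\) and all its partial derivatives extend continuously by zero across \(K\), so that \(u \in C^\infty(\overline{\Omega})\) with \(\{u = 0\} = K\). Setting \(V := \Delta u / u\) on \(\{u > 0\}\) and \(V := 0\) on \(K\) yields \(-\Delta u + V u = 0\) pointwise, hence in the sense of distributions; a direct computation gives
\[
V \;=\; -\,\frac{\Delta w}{1 + w} \;+\; 2\,\frac{|\nabla w|^2}{(1 + w)^2}.
\]

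\textbf{Verifying \(V \in L^p\) --- the main obstacle.} The first term is pointwise bounded by \(|\Delta w|\) and therefore lies in \(L^p(\Omega)\) since \(w \in W^{2, p}\). The real difficulty is the quadratic gradient term \(|\nabla w|^2/(1+w)^2 = |\nabla \log(1 + w)|^2\): in the interesting range \(1 < p < N/2\), the quantity \(|\nabla w|^2\) is not a priori in \(L^p\), so the weight \((1+w)^{-2}\) must supply the missing integrability. The verification decomposes \(\Omega\) into the dyadic layers where each \(\varphi_n\) dominates, exploits \(w \gtrsim 2^n\) on the \(n\)th layer to absorb the factor \(|\nabla \varphi_n|^2\) (whose \(L^p\)-mass on the small support is controlled through \(\|\varphi_n\|_{W^{2,p}} \leq 2^{-n}\) via Calderón--Zygmund estimates and an interpolation using the smallness of \(\supp \varphi_n\)), and sums over \(n\). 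The resulting geometric-series bound for \(\int |\nabla \log(1+w)|^{2p}\) is the crux of the proposition: it rests on the double smallness built into the sequence \((\varphi_n)\) --- exponentially small \(W^{2,p}\) norms together with shrinking supports --- which is precisely how the zero-capacity hypothesis on \(K\) enters the construction.
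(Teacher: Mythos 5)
Your overall architecture coincides with the paper's: sum capacity-minimizing functions \(\varphi_n\) into a function \(w\) blowing up exactly on \(K\), set \(u = 1/(1+w)\) and \(V = \Delta u/u\). However, the two steps you yourself identify as delicate are precisely where your proposed mechanisms fail. First, the smoothness of \(u\) across \(K\): you claim that a super-polynomial rate of blow-up of \(w\) forces all derivatives of \(1/(1+w)\) to extend continuously by zero. This is unjustified and false in general, since \(\nabla(1/(1+w)) = -\nabla w/(1+w)^2\) and nothing in your construction controls \(\abs{\nabla w}\) (let alone higher derivatives) pointwise near \(K\) in terms of the size of \(w\): the \(\varphi_n\) have small \(W^{2,p}\) norms but completely uncontrolled \(C^k\) norms, so \(\abs{\nabla w}\) may outgrow \((1+w)^2\) along sequences converging to \(K\). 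Tuning the values \(2^n\) in advance cannot repair this, because the offending quantities are only known after each \(\varphi_n\) has been chosen. The paper's proof resolves this with two additional devices: the nesting condition \(\supp\varphi_{i+1}\subset\{\varphi_i>1\}\), which makes the partial sums locally stationary off \(K\), and an inductive choice of large coefficients \(\alpha_i\ge 1\) in \(w_j=1+\sum_{i\le j}\alpha_i\varphi_i\) which, via the identity \(\bigabs{t^{k}\tfrac{\dif^{k}}{\dif t^{k}}(1/t)}=k!/t\), keeps every \(D^{\ell}(1/w_j)\) uniformly bounded on \(\{\varphi_{j-1}>1\}\), uniformly with respect to the not-yet-chosen coefficients. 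Without something of this kind you do not obtain \(u\in C^{\infty}(\overline\Omega)\), nor even \(u\in C^{2}\), which you need for the pointwise equation.

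Second, the \(L^p\) bound on \(\abs{\nabla w}^2/(1+w)^2\). Your mechanism --- on the layer where \(w\gtrsim 2^n\), absorb \(\abs{\nabla\varphi_n}^2\) using the gain \(2^{-2n}\) together with Calder\'on--Zygmund estimates and an interpolation exploiting the smallness of \(\supp\varphi_n\) --- does not close. For \(1<p<\frac{N}{2}\) the norm \(\norm{\nabla\varphi_n}_{L^{2p}}\) is not controlled by \(\norm{\varphi_n}_{W^{2,p}}\); smallness of the support goes the wrong way in H\"older; and the unweighted interpolation \(\norm{\nabla\varphi}_{L^{2p}}^2\lesssim\norm{D^2\varphi}_{L^p}\norm{\varphi}_{L^\infty}\) reintroduces \(\norm{\varphi_n}_{L^\infty}\ge 2^n\), which defeats the factor \(2^{-n}\). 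Moreover, \(w\) is only \(\gtrsim 2^n\) on the inner neighborhood where \(\varphi_n\ge 2^n\); on the transition region of \(\supp\varphi_n\) where \(\varphi_n\) climbs from \(0\) to \(2^n\) and its gradient can be enormous, you only know \(1+w\ge 1\), so the layer gain is absent exactly where it is needed. The correct tool is the weighted Maz'ya-type inequality (Lemma~\ref{lemmaMazya}), \(\int\abs{\nabla\varphi}^{2p}/(1+\varphi)^{2p}\le C\int\abs{D^2\varphi}^{p}/(1+\varphi)^{p}\), applied after the pointwise bound \(1+w\ge 1+\varphi_n\): it converts \(\norm{\nabla\varphi_n/(1+\varphi_n)}_{L^{2p}}^2\) into \(C\norm{D^2\varphi_n}_{L^p}\), and summability of the square roots of the \(W^{2,p}\) norms then closes the estimate. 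This lemma is the analytic crux of the proposition and is missing from your argument.
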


The idea is to construct a nonnegative function \(u\) of the form \(\frac{1}{w}\) where \(w \in C^\infty(\overline\Omega \setminus K)\) is a function diverging to \(+\infty\) in \(K\).
In this case, we have pointwisely in \(\Omega \setminus K\) the identity
\[
\Delta \Big(\frac{1}{w}\Big)
=
\bigg( {- {\frac{\Delta w}{w}}} + 2 \frac{\abs{\nabla w}^2}{w^2}  \bigg) \frac{1}{w}.
\]

The heart of the matter is to find a suitable estimate for the function in parentheses in the right-hand side.
For this purpose we need the following estimate:

\begin{lemma}
\label{lemmaMazya}
For every \(p \ge 1\) and for every nonnegative function \(\varphi \in C_c^\infty(\R^N)\), we have
\[
\int\limits_{\R^N} \frac{\abs{\nabla\varphi}^{2p}}{(1 + \varphi)^{2p}}
\le C \int\limits_{\R^N} \frac{\abs{D^2\varphi}^p}{(1 + \varphi)^{p}},
\]
for some constant \(C > 0\) depending on \(p\).
\end{lemma}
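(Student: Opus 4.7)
The plan is to reduce the inequality via the substitution $u := \log(1+\varphi)$. Since $\varphi\ge 0$ and $\varphi\in C_c^\infty(\R^N)$, the function $u$ is also nonnegative and in $C_c^\infty(\R^N)$. A direct computation gives $\nabla\varphi=(1+\varphi)\nabla u$ and the matrix identity
\[
D^2\varphi=(1+\varphi)\bigl(D^2 u+\nabla u\otimes\nabla u\bigr).
\]
Setting $A:=D^2 u+\nabla u\otimes\nabla u$, this yields $\abs{\nabla\varphi}/(1+\varphi)=\abs{\nabla u}$ and $\abs{D^2\varphi}/(1+\varphi)=\abs{A}$ (in any matrix norm, all being equivalent). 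Hence the lemma is equivalent to showing
\[
\int_{\R^N}\abs{\nabla u}^{2p}\le C\int_{\R^N}\abs{A}^p
\]
for every nonnegative $u\in C_c^\infty(\R^N)$.

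The case $p=1$ is immediate: since $u$ is compactly supported, $\int\Delta u=0$, and $\Delta u=\operatorname{tr}(A)-\abs{\nabla u}^2$ gives $\int\abs{\nabla u}^2=\int\operatorname{tr}(A)\le\sqrt{N}\int\abs{A}$. For $p>1$, I would begin from the pointwise identity
\[
A\nabla u\cdot\nabla u=\nabla u\cdot D^2u\,\nabla u+\abs{\nabla u}^4,
\]
which follows from $(\nabla u\otimes\nabla u)\nabla u=\abs{\nabla u}^2\nabla u$. Multiply by $\abs{\nabla u}^{2(p-2)}$, integrate, and use the chain-rule identity
\[
\abs{\nabla u}^{2(p-2)}\nabla u\cdot D^2u\,\nabla u=\frac{1}{2(p-1)}\nabla u\cdot\nabla\!\bigl(\abs{\nabla u}^{2(p-1)}\bigr).
\]
An integration by parts converts the $D^2u$ contribution into $-\frac{1}{2(p-1)}\int\abs{\nabla u}^{2(p-1)}\Delta u$, and substituting $\Delta u=\operatorname{tr}(A)-\abs{\nabla u}^2$ and collecting terms produces the clean relation
\[
\frac{2p-1}{2(p-1)}\int\abs{\nabla u}^{2p}=\int\abs{\nabla u}^{2(p-2)}A\nabla u\cdot\nabla u+\frac{1}{2(p-1)}\int\abs{\nabla u}^{2(p-1)}\operatorname{tr}(A).
\]
The pointwise bounds $\abs{A\nabla u\cdot\nabla u}\le\abs{A}\abs{\nabla u}^2$ and $\abs{\operatorname{tr}(A)}\le\sqrt{N}\abs{A}$ then control both right-hand integrals by a multiple of $\int\abs{\nabla u}^{2(p-1)}\abs{A}$. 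A single application of Hölder's inequality with exponents $p/(p-1)$ and $p$, followed by dividing by the factor $\bigl(\int\abs{\nabla u}^{2p}\bigr)^{(p-1)/p}$, closes the estimate.

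The main technical obstacle is the justification of the integration by parts when $1<p<2$, since the weight $\abs{\nabla u}^{2(p-2)}$ has negative exponent and is singular at critical points of $u$. I would handle this by the standard regularization $\abs{\nabla u}^2\mapsto\abs{\nabla u}^2+\epsilon$: the chain-rule identity and the integration by parts are straightforward for each $\epsilon>0$, and the passage to the limit $\epsilon\to 0^+$ is justified by dominated convergence from the uniform pointwise bound $\abs{\nabla u}^{2(p-2)}\abs{\nabla u\cdot D^2 u\,\nabla u}\le\abs{\nabla u}^{2(p-1)}\abs{D^2 u}$, whose right-hand side is bounded and compactly supported.
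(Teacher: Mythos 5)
Your proof is correct and, after undoing the substitution $u=\log(1+\varphi)$, is essentially the paper's: your vector field $\abs{\nabla u}^{2p-2}\nabla u$ is exactly the paper's $\abs{\nabla\varphi}^{2p-2}\nabla\varphi/(1+\varphi)^{2p-1}$, your intermediate quantity $\abs{A}\,\abs{\nabla u}^{2(p-1)}$ is exactly the paper's $\abs{D^2\varphi}\,\abs{\nabla\varphi}^{2(p-1)}/(1+\varphi)^{2p-1}$, and both arguments conclude with the same H\"older-and-absorption step. The only genuine addition is your explicit regularization of the singular weight $\abs{\nabla u}^{2(p-2)}$ when $1<p<2$, a point the paper's one-line appeal to the divergence theorem leaves implicit.
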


\begin{proof}
We rely on the pointwise identity
\[
\div{\bigg[ \frac{\abs{\nabla\varphi}^{2p - 2} \nabla\varphi}{(1 + \varphi)^{2p-1}}  \bigg]}
= - (2p-1) \frac{\abs{\nabla\varphi}^{2p}}{(1 + \varphi)^{2p}}
+ \frac{\div{\big(\abs{\nabla\varphi}^{2p - 2}}  \nabla\varphi \big)}{(1 + \varphi)^{2p-1}}.
\]
Applying the Divergence theorem, we have
\[
\int\limits_{\R^N} \frac{\abs{\nabla\varphi}^{2p}}{(1 + \varphi)^{2p}}
\le C \int\limits_{\R^N} \frac{\abs{D^2\varphi} \abs{\nabla\varphi}^{2(p-1)}}{(1 + \varphi)^{2p-1}}.
\]
This is the estimate we want when \(p = 1\).{}
In the case \(p > 1\), we obtain the conclusion applying H\"{o}lder's inequality in the right-hand side.
\end{proof}

The lemma above is reminiscent of Maz'ya's inequality \cite{Mazya:1973}*{proof of Theorem~11} valid for \(p > 1\):
\[
\int\limits_{\R^N} \frac{\abs{\nabla\varphi}^{2p}}{(1 + \varphi)^p}
\le C \int\limits_{\R^N} \abs{D^2\varphi}^p,
\]
with the same proof.

Before proving the proposition, we also observe that for any compact set \(K \subset \R^{N}\) with zero \(W^{2, p}\) capacity, we may choose in the definition of the capacity of \(K\) a minimizing sequence \((\varphi_i)_{i \in \N}\) in \(C_c^\infty(\R^{N})\) with support in some fixed open set \(\omega \supset K\).
Indeed, it suffices to multiply any given minimizing sequence in \(C_c^\infty(\R^{N})\) by some fixed nonnegative function  in \(C_c^\infty(\omega)\) which is greater than or equal to \(1\) in \(K\).
Thus, for every \(\epsilon > 0\) and for every open set \(\omega \supset K\), there exists a nonnegative function \(\varphi \in C_c^\infty(\omega)\) such that 
\[
\norm{\varphi}_{W^{2, p}(\R^{N})} \le \epsilon
\]
and \(\varphi > 1\) in \(K\).

\begin{proof}[Proof of Proposition~\ref{propositionExample}]
Let \((\omega_i)_{i \in \N}\) be a non-increasing sequence of open subsets of \(\Omega\) containing \(K\) such that
\[
\bigcap_{i \in \N}{\omega_i} = K.
\]
Given a sequence of positive numbers \((\epsilon_i)_{i \in \N}\), we construct by induction a sequence of nonnegative functions \((\varphi_i)_{i \in \N}\) in \(C_c^\infty(\Omega)\) such that, for every \(i \in \N\), we have  
\begin{enumerate}[(a)]
\item 
\label{itemA} 
\(
\norm{\varphi_i}_{W^{2, p}(\Omega)} \le \epsilon_i,
\) 
\item 
\label{itemB} 
\(\varphi_i > 1\) in \(K\),
\item
\label{itemC} 
\(
\supp{\varphi_{i+1}} \subset \omega_i \cap \{\varphi_{i} > 1\}.
\)
\end{enumerate}

We now consider the sequence of functions \((w_j)_{j \in \N}\) defined by
\begin{equation}
	\label{eqChoice}
w_j = 1 + \sum_{i=0}^j \alpha_{i} \varphi_i,
\end{equation}
where \((\alpha_{i})_{i \in \N}\) is a sequence of real numbers such that \(\alpha_{i} \ge 1\) for every \(i \in \N\).{}
The explicit choice of \((\alpha_{i})_{i \in \N}\) will ensure the smoothness of the pointwise limit of the sequence \((\frac{1}{w_{j}})_{j \in \N}\).

By property~\eqref{itemC}, for every \(k, l \in \N\) such that \(k \ge l\) we have 
\begin{equation}
	\label{eqStationary}
	w_{k} = w_{l} 
	\quad \text{in \(\overline\Omega \setminus \omega_{l}\).}
\end{equation}
Thus, the sequence \((w_{j})_{j \in \N}\) is stationary and, in particular, converges in \(\overline\Omega \setminus K\).
On the other hand, if \(x \in K\), then by property~\eqref{itemB} we have \(w_{j}(x) \ge j + 1\) for every \(j \in \N\).
Therefore, \(K\) is the set where the sequence \((w_{j})_{j \in \N}\) diverges pointwisely to \(+\infty\).

For every \(j \in \N\),  we have \(w_j \in C^\infty(\overline\Omega)\) and
\begin{equation}
	\label{eqIdentityPDE}
	\Delta \Big(\frac{1}{w_j}\Big)
	= \bigg( {- {\frac{\Delta w_j}{w_j}}} + 2 \frac{\abs{\nabla w_j}^2}{w_j^2}  \bigg) \frac{1}{w_j}.
\end{equation}
The sequence \((\frac{1}{w_j})_{j \in \N}\) converges uniformly in \(\overline\Omega\).
Indeed, by property~\eqref{itemC} for every \(k, l \in \N\) such that \(k \ge l\) we have \(w_k = w_l\) in \(\overline\Omega \setminus \{\varphi_l > 1\}\).{}
Since \(w_k \ge w_l \ge l+1\) in \(\{\varphi_l > 1\}\), we get
\[
\Bignorm{\frac{1}{w_k} - \frac{1}{w_l}}_{L^\infty(\Omega)}
= \Bignorm{\frac{1}{w_k} - \frac{1}{w_l}}_{L^\infty(\{\varphi_l > 1\})}
\le \frac{1}{l+1}.
\]
By \eqref{eqStationary}, the sequence of functions \((V_j)_{j \in \N}\) defined by
\[
V_j = 
- \frac{\Delta w_j}{w_j} + 2 \frac{\abs{\nabla w_j}^2}{w_j^2}
\]
is also pointwisely stationary in \(\Omega \setminus K\), and we take a measurable function  \(V : \Omega \to \R\) such that, for every \(x \in \Omega \setminus K\),{}
\[{}
V(x)
= \lim_{j \to \infty}{V_{j}(x)}.
\]

\begin{claim}
	For every \(j \in \N\), we have
\[
\norm{V_j}_{L^p(\Omega)}
\le C' \bigg[\sum_{i=0}^j \epsilon_i + \Big(\sum_{i=0}^j \epsilon_i^{1/2} \Big)^2 \bigg].
\]
\end{claim}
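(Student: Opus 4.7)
My plan is to estimate $V_{j} = -\Delta w_{j}/w_{j} + 2\abs{\nabla w_{j}}^{2}/w_{j}^{2}$ by splitting it into its two natural summands and handling each in $L^{p}(\Omega)$. The triangle inequality in $L^{p}(\Omega)$ reduces the claim to proving
\[
\Bignorm{\frac{\Delta w_{j}}{w_{j}}}_{L^{p}(\Omega)} \lesssim \sum_{i=0}^{j} \epsilon_{i}
\qquad\text{and}\qquad
\Bignorm{\frac{\abs{\nabla w_{j}}^{2}}{w_{j}^{2}}}_{L^{p}(\Omega)} \lesssim \Big(\sum_{i=0}^{j} \epsilon_{i}^{1/2}\Big)^{2}.
\]

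For the first bound I would use that $w_{j} \ge 1$ pointwisely, so the quotient is dominated by $\abs{\Delta w_{j}} \le \sum_{i=0}^{j} \alpha_{i}\abs{\Delta\varphi_{i}}$. Minkowski's inequality in $L^{p}$ combined with property~(a) then produces the bound $\sum_{i}\alpha_{i}\epsilon_{i}$, which is of the required form once the $\alpha_{i}$ factors are absorbed into the $\epsilon_{i}$ (either by taking $\alpha_{i}=1$ at this stage or by exploiting the later explicit choice of the $\alpha_{i}$).

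For the second, more delicate bound I would rewrite $\norm{\abs{\nabla w_{j}}^{2}/w_{j}^{2}}_{L^{p}(\Omega)} = \norm{\abs{\nabla w_{j}}/w_{j}}_{L^{2p}(\Omega)}^{2}$ and then apply Minkowski in $L^{2p}$ to reduce to controlling each summand $\alpha_{i}\norm{\abs{\nabla\varphi_{i}}/w_{j}}_{L^{2p}(\Omega)}$. The pointwise inequality $w_{j} \ge 1 + \alpha_{i}\varphi_{i}$ replaces the denominator by $1+\alpha_{i}\varphi_{i}$; extending $\varphi_{i}$ by zero to $\R^{N}$ and applying Lemma~\ref{lemmaMazya} to the nonnegative smooth function $\alpha_{i}\varphi_{i}$, together with the trivial estimate $(1+\alpha_{i}\varphi_{i})^{-p} \le 1$ on the right-hand side of the lemma, yields a bound $C\norm{D^{2}(\alpha_{i}\varphi_{i})}_{L^{p}(\R^{N})}^{1/2} \lesssim \epsilon_{i}^{1/2}$. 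Summing in $i$ and squaring produces the required $(\sum_{i}\epsilon_{i}^{1/2})^{2}$ shape.

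The main obstacle is identifying Lemma~\ref{lemmaMazya} as the right tool: it is precisely the mechanism that upgrades the $W^{2,p}$ control of $\varphi_{i}$ supplied by property~(a) into $L^{2p}$ control of $\abs{\nabla\varphi_{i}}/(1+\varphi_{i})$, which, after the lower bound $w_{j} \ge 1+\alpha_{i}\varphi_{i}$, is exactly the gradient-over-weight ratio hidden inside $V_{j}$. The only other technical point is the bookkeeping of the $\alpha_{i}$'s, which is absorbed into the $\epsilon_{i}$'s by the (later) explicit choice of these weights.
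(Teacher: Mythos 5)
Your argument is essentially the paper's own proof: the same splitting of \(V_j\) into \(-\Delta w_j/w_j\) and \(2\abs{\nabla w_j}^2/w_j^2\), the bound \(w_j \ge 1\) plus Minkowski for the first term, and the rewriting \(\norm{\abs{\nabla w_j}^2/w_j^2}_{L^p} = \norm{\nabla w_j/w_j}_{L^{2p}}^2\) followed by the lower bound on the denominator and Lemma~\ref{lemmaMazya} for the second, which is exactly how \eqref{eqEstimate1} and \eqref{eqEstimate2} are obtained. The one point where you differ is that you carry the coefficients \(\alpha_i\), which the paper silently drops when passing from \(\Delta w_j = \sum_i \alpha_i \Delta\varphi_i\) to \(\sum_i\norm{\Delta\varphi_i/w_j}_{L^p}\); your honest version of the bound is \(C'[\sum_i\alpha_i\epsilon_i + (\sum_i(\alpha_i\epsilon_i)^{1/2})^2]\). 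Be aware, though, that your proposed fix --- absorbing \(\alpha_i\) into \(\epsilon_i\) ``by the later explicit choice'' --- does not work as stated: in the construction the \(\epsilon_i\)'s and \(\varphi_i\)'s are fixed \emph{before} any \(\alpha_j\) is selected (each \(\alpha_j\) depends on \(w_{j-1}\), \(\varphi_j\) and \(\varphi_{j+1}\)), so one cannot retroactively shrink \(\epsilon_i\) relative to \(\alpha_i\); this bookkeeping issue is inherited from the paper rather than introduced by you, but it deserves an explicit resolution rather than a parenthetical.
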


\begin{proof}[Proof of the claim]
By the triangle inequality and by the inequality \(w_{j} \ge 1\), we have
\begin{equation}
\label{eqEstimate1}
\Bignorm{\frac{\Delta w_j}{w_j}}_{L^p(\Omega)}
\le \sum_{i=0}^j \Bignorm{\frac{\Delta \varphi_i}{w_j}}_{L^p(\Omega)}
\le \sum_{i=0}^j \norm{\Delta\varphi_i}_{L^p(\Omega)}.
\end{equation}
Concerning the second term, by the triangle inequality we have
\[
\Bignorm{\frac{\abs{\nabla w_j}^2}{w_j^2}}_{L^p(\Omega)}
=
\Bignorm{\frac{\nabla w_j}{w_j}}_{L^{2p}(\Omega)}^2
\le \bigg(\sum_{i=0}^j \Bignorm{\frac{{\nabla \varphi_i}}{w_j}}_{L^{2p}(\Omega)} \bigg)^2.
\]
Since for every \(i \le j\) we have \(w_j \ge 1 + \varphi_i\), we may estimate the quantity inside the summation as
\[
\Bignorm{\frac{{\nabla \varphi_i}}{w_j}}_{L^{2p}(\Omega)}
\le \Bignorm{\frac{{\nabla \varphi_i}}{1 + \varphi_i}}_{L^{2p}(\Omega)}.
\]
By the variant of Maz'ya's inequality (Lemma~\ref{lemmaMazya}), we have
\[
\Bignorm{\frac{\nabla \varphi_i}{1 + \varphi_i}}_{L^{2p}(\Omega)}^2
\le C \Bignorm{\frac{D^2 \varphi_i}{1 + \varphi_i}}_{L^p(\Omega)}
\le C \norm{D^2 \varphi_i}_{L^p(\Omega)}.
\]
Therefore,
\begin{equation}
\label{eqEstimate2}
\Bignorm{\frac{\abs{\nabla w_j}^2}{w_j^2}}_{L^p(\Omega)}
\le C\bigg(\sum_{i=0}^j \norm{D^2 \varphi_i}_{L^p(\Omega)}^{1/2} \bigg)^2.
\end{equation}
Combining estimates \eqref{eqEstimate1} and \eqref{eqEstimate2} with property~\eqref{itemA}, the estimate follows.
\end{proof}

Choosing the sequence \((\epsilon_i)_{i \in \N}\) such that the series 
\(
\sum\limits_{i=0}^\infty \epsilon_i^{1/2}
\)
converges, it follows that the sequence \((V_j)_{j \in \N}\) is bounded in \(L^p(\Omega)\).
By Fatou's lemma we deduce that \(V \in L^{p}(\Omega)\), and by Hölder's inequality the sequence \((V_j)_{j \in \N}\) is equi-integrable in \(\Omega\).
Letting \(j\) tend to infinity in the equation \eqref{eqIdentityPDE}, it follows from Vitali's convergence theorem that the uniform limit \(u\) of the sequence \((\frac{1}{w_j})_{j \in \N}\) satisfies
\[
\Delta u = V u \quad \text{in the sense of distributions in \(\Omega\),}
\]
regardless of the choice of the sequence \((\alpha_{i})_{i \in \N}\).{}

We now choose the sequence \((\alpha_{i})_{j \in \N}\) by induction as follows.
Let \(\alpha_{0} = 1\).{}
Take \(\alpha_{0}, \dots, \alpha_{j-1}\) for some \(j \in \N_{*}\), and define \(w_{j - 1}\) accordingly as in \eqref{eqChoice}.
We observe that, for every \(\ell \in \N_{*}\), we have
\begin{equation}
	\label{eqLimitDerivatives}
\lim_{\alpha \to \infty}{\Bignorm{D^{\ell} \Big(\frac{1}{w_{j-1} + \alpha\varphi_{j} + \beta\varphi_{j+1}}\Big)}_{L^{\infty}(\{\varphi_{j} > 1\})}} = 0,
\end{equation}
uniformly with respect to \(\beta \ge 0\).{}
Indeed, by differentiation of composite functions, this uniform limit is a consequence of the one dimensional identity: for every \(k \in \N_{*}\) and for every \(t > 0\),
\[{}
\biggabs{t^{k} \frac{\dif^{k}}{\dif t^{k}}\Big( \frac{1}{t} \Big)} 
= \frac{k!}{t}.
\]
By \eqref{eqLimitDerivatives}, we may take \(\alpha_{j} \ge 1\) such that, for every \(\ell \in \{1, \dots, j\}\) and for every \(\beta \ge 0\), we have
\[{}
\Bignorm{D^{\ell} \Big(\frac{1}{w_{j-1} + \alpha_{j}\varphi_{j} + \beta\varphi_{j+1}}\Big)}_{L^{\infty}(\{\varphi_{j} > 1\})}
\le 1.
\]
This concludes the choice of the sequence \((\alpha_{i})_{i \in \N}\). 
Since 
\[{}
w_{j + 1} = w_{j-1} + \alpha_{j}\varphi_{j} + \alpha_{j+1}\varphi_{j+1},
\]
for every \(j \ge \ell\) we then have
\begin{equation}
 \label{eqEstimateDerivatives}
\Bignorm{D^{\ell} \Big(\frac{1}{w_{j+1}}\Big)}_{L^{\infty}(\{\varphi_{j} > 1\})}
\le 1.
\end{equation}

\begin{claim}
For every \(\ell \in \N_{*}\), the sequence \((D^{\ell} \frac{1}{w_{j}})_{j \in \N}\) is uniformly bounded in \(\overline\Omega\).{}
\end{claim}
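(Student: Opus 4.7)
The plan is to reduce the claim to the pointwise bound \eqref{eqEstimateDerivatives} by exploiting the nesting of supports encoded in property~\eqref{itemC}.  The key observation is that \(\supp{\varphi_{i+1}} \subset \{\varphi_{i} > 1\}\) implies inductively that \(\supp{\varphi_{i}} \subset \{\varphi_{k} > 1\}\) for every \(i \ge k + 1\), whence for every \(j \ge k\) the difference \(w_{j} - w_{k} = \sum_{i = k+1}^{j} \alpha_{i} \varphi_{i}\) is a finite sum supported in the closed set \(\bigcup_{i = k+1}^{j} \supp{\varphi_{i}} \subset \{\varphi_{k} > 1\}\).  Consequently, \(w_{j} = w_{k}\) holds identically on the open set \(\Omega \setminus \bigcup_{i = k+1}^{j} \supp{\varphi_{i}}\), which contains \(\Omega \setminus \{\varphi_{k} > 1\}\), so on this open neighborhood the derivatives coincide: \(D^{\ell}\bigl(\tfrac{1}{w_{j}}\bigr) = D^{\ell}\bigl(\tfrac{1}{w_{k}}\bigr)\).

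Fix \(\ell \in \N_{*}\).  The range \(j \le \ell\) is disposed of immediately: each such \(w_{j}\) is a fixed smooth function bounded below by \(1\), so \(\Bignorm{D^{\ell}\bigl(\tfrac{1}{w_{j}}\bigr)}_{L^{\infty}(\overline\Omega)}\) is finite, and the maximum over the finitely many such indices depends only on \(\ell\) and on \(\alpha_{0}, \dots, \alpha_{\ell}\).  For \(j \ge \ell + 1\), I would decompose \(\overline\Omega\) as the disjoint union of \(\{\varphi_{j-1} > 1\}\), the annular layers \(\{\varphi_{k} > 1\} \setminus \{\varphi_{k+1} > 1\}\) for \(k = \ell, \dots, j-2\), and the outer region \(\overline\Omega \setminus \{\varphi_{\ell} > 1\}\).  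On \(\{\varphi_{j-1} > 1\}\), the bound \eqref{eqEstimateDerivatives} used with index \(j - 1 \ge \ell\) gives \(\bigabs{D^{\ell}\bigl(\tfrac{1}{w_{j}}\bigr)} \le 1\) directly.  On each annular layer, the nesting observation yields \(\tfrac{1}{w_{j}} = \tfrac{1}{w_{k+1}}\) on an open neighborhood, and since the layer is contained in \(\{\varphi_{k} > 1\}\) with \(k \ge \ell\), a second application of \eqref{eqEstimateDerivatives} --- now to \(w_{k+1}\) --- provides the same bound \(\bigabs{D^{\ell}\bigl(\tfrac{1}{w_{j}}\bigr)} \le 1\).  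On the outer region, the nesting instead gives \(\tfrac{1}{w_{j}} = \tfrac{1}{w_{\ell}}\) on an open neighborhood, and since \(w_{\ell}\) is a fixed smooth function independent of \(j\), the quantity \(\Bignorm{D^{\ell}\bigl(\tfrac{1}{w_{\ell}}\bigr)}_{L^{\infty}(\overline\Omega)}\) is a finite constant that controls this piece.

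Combining these three contributions produces a constant \(M_{\ell}\), depending only on \(\ell\) and on \(\alpha_{0}, \dots, \alpha_{\ell}\), such that \(\Bignorm{D^{\ell}\bigl(\tfrac{1}{w_{j}}\bigr)}_{L^{\infty}(\overline\Omega)} \le M_{\ell}\) for every \(j \in \N\), which is the claim.  The only step requiring care is verifying that on each annular layer the identity \(w_{j} = w_{k+1}\) holds on a genuine open neighborhood of the layer rather than on the layer itself, so that derivatives can legitimately be transferred; but this is exactly what the first paragraph establishes from \eqref{itemC}.  No other ingredient is needed beyond the pointwise derivative bound \eqref{eqEstimateDerivatives} already proved and the geometric decomposition supplied by the decreasing sequence of level sets \(\{\varphi_{k} > 1\}\).
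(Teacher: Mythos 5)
Your argument is correct and follows essentially the same route as the paper: the same decomposition of \(\overline\Omega\) into the innermost level set, the annular layers \(\{\varphi_{k} > 1\} \setminus \{\varphi_{k+1} > 1\}\), and the outer region where \(w_{j} = w_{\ell}\), with estimate \eqref{eqEstimateDerivatives} applied on each piece after transferring derivatives via the support nesting from property~\eqref{itemC}. Your explicit verification that the identity \(w_{j} = w_{k+1}\) holds on an open neighborhood of each layer, and your separate treatment of the finitely many indices \(j \le \ell\), are careful touches but do not change the substance of the proof.
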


\begin{proof}[Proof of the claim]
Given \(j \in \N\) such that \(j \ge \ell\), we decompose the domain as
\[{}
\Omega{}
= \big(\Omega\setminus \{\varphi_{\ell} \le 1\}\big) 
\cup{}
\bigcup_{i = \ell}^{j-1}{\big( \{\varphi_{i} > 1\} \setminus \{\varphi_{i+1} \le 1\} \big)}
\cup{}
\{\varphi_{j} > 1\}.
\]
By property~\eqref{itemC}, we have 
\[{}
w_{j+1} = w_{\ell}
\quad \text{in \(\Omega\setminus \{\varphi_{\ell} \le 1\}\),}
\]
and for every \(i \in \{\ell, \dots, j-1\}\) we also have 
\[{}
w_{j+1} = w_{i+1}
\quad \text{in \(\{\varphi_{i} > 1\} \setminus \{\varphi_{i+1} \le 1\}\).}
\]
Therefore, by estimate \eqref{eqEstimateDerivatives} we obtain 
\[{}
\Bignorm{D^{\ell} \Big(\frac{1}{w_{j+1}}\Big)}_{L^{\infty}(\Omega)}
\le \max{\left\{  \Bignorm{D^{\ell} \Big(\frac{1}{w_{\ell}}\Big)}_{L^{\infty}(\Omega\setminus \{\varphi_{\ell} \le 1\})} , 1 \right\}}.
\]
The right-hand side being independent of \(j \ge \ell\), the sequence \((D^{\ell} \frac{1}{w_{j}})_{j \in \N}\) is thus uniformly bounded in \(\overline\Omega\).
\end{proof}

Since \(w_{j} = 1\) in \(\overline\Omega \setminus \supp{\varphi_{0}}\),
it follows from the claim that the uniform limit \(u\) of the sequence \((\frac{1}{w_{j}})_{j \in \N}\) belongs to \(C^{\infty}(\overline\Omega)\), and for every \(\ell \in \N_{*}\) the sequence \((D^{\ell} \frac{1}{w_{j}})_{j \in \N}\) converges uniformly to \(D^{\ell} u\) in \(\overline\Omega\).
In particular, the sequence \((\Delta \frac{1}{w_{j}})_{n \in \N}\) converges uniformly to \(\Delta u\) in \(\overline{\Omega}\), whence as \(j\) tends to infinity in \eqref{eqIdentityPDE} we get 
\[{}
\Delta u = V u
\quad{}
\text{pointwisely in \(\Omega\).}
\]
This concludes the proof of the proposition.
\end{proof}

\medskip

The previous construction has the following counterpart for \(p = 1\):

\begin{proposition}
\label{propositionExampleW12}
Let \(\Omega \subset \R^N\) be an open set.
For every compact set \(K \subset \Omega\) with zero \(W^{1, 2}\) capacity there exist a nonnegative function \(u \in C^\infty(\overline\Omega)\) and \(V \in L^1(\Omega)\) such that
\[
K = \{x \in \overline{\Omega} : u(x) = 0\}, 
\]
and the equation
\[
- \Delta u + V u = 0	
\]
is satisfied pointwisely and in the sense of distributions in \(\Omega\).
\end{proposition}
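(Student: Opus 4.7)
The plan is to adapt the construction of Proposition~\ref{propositionExample}: we keep the form $u = \lim_{j \to \infty} \frac{1}{w_j}$ with $w_j = 1 + \sum_{i=0}^{j} \alpha_i \varphi_i$, and define
\[
V_j = -\frac{\Delta w_j}{w_j} + 2 \frac{\abs{\nabla w_j}^2}{w_j^2}.
\]
The inductive choice of $(\alpha_i)_{i \in \N}$ that ensures smoothness of $u$ in $C^\infty(\overline\Omega)$ and the pointwise identity $\Delta u = Vu$ in $\Omega$ is identical to that of Proposition~\ref{propositionExample}; the only new task is to show that $(V_j)_{j \in \N}$ is bounded in $L^1(\Omega)$.

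The crucial difference lies in the choice of the $\varphi_i$: Maz'ya's inequality (Lemma~\ref{lemmaMazya}) is no longer helpful, since it would require $D^2 \varphi_i \in L^1$, which is not implied by the zero $W^{1, 2}$ capacity hypothesis alone. We instead establish an enriched approximation: for every open set $U \supset K$ and every $\eta > 0$, there exists a nonnegative $\varphi \in C_c^\infty(U)$ with $\varphi > 1$ on $K$, $\norm{\varphi}_{W^{1, 2}(\R^N)} \le \eta$, \emph{and} $\norm{\Delta \varphi}_{L^1(\R^N)} \le \eta$. To obtain such a $\varphi$ we mollify and multiply by a smooth cutoff the $W^{1, 2}$ equilibrium potential $h_W$ of an open neighborhood $W \subset U$ of $K$ with $\capt_{W^{1, 2}}(W)$ sufficiently small: the Euler--Lagrange identity $-\Delta h_W + h_W = \mu_W$, with a nonnegative finite measure $\mu_W$ whose total mass equals $\capt_{W^{1, 2}}(W)$, provides simultaneously the $W^{1, 2}$ bound (from $\norm{h_W}_{W^{1, 2}}^2 = \capt_{W^{1, 2}}(W)$) and the $L^1$ bound on the Laplacian.

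With this approximation, the inductive construction of Proposition~\ref{propositionExample} produces a sequence $(\varphi_i)_{i \in \N}$ in $C_c^\infty(\omega_i)$ satisfying the analogues of properties (b) and (c) together with
\[
\norm{\varphi_i}_{W^{1, 2}(\Omega)} \le \epsilon_i
\quad\text{and}\quad
\norm{\Delta \varphi_i}_{L^1(\Omega)} \le \epsilon_i.
\]
Since the smoothness induction only fixes $\alpha_i$ step by step, we are free to shrink $\epsilon_i$ afterwards so that $\sum_i \alpha_i \epsilon_i < \infty$. Using $w_j \ge 1$ and the triangle inequality, we obtain
\[
\Bignorm{\frac{\Delta w_j}{w_j}}_{L^1(\Omega)}
\le \sum_{i=0}^j \alpha_i \norm{\Delta \varphi_i}_{L^1(\Omega)}
\le \sum_i \alpha_i \epsilon_i,
\]
\[
\Bignorm{\frac{\abs{\nabla w_j}}{w_j}}_{L^2(\Omega)}
\le \sum_{i=0}^j \alpha_i \norm{\nabla \varphi_i}_{L^2(\Omega)}
\le \sum_i \alpha_i \epsilon_i,
\]
so that $(V_j)_{j \in \N}$ is uniformly bounded in $L^1(\Omega)$. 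Fatou's lemma yields $V \in L^1(\Omega)$, and the equation $-\Delta u + Vu = 0$ holds pointwisely in $\Omega \setminus K$ by the eventual stationarity of $(w_j)_{j \in \N}$ off $K$, hence almost everywhere and distributionally in $\Omega$.

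The main obstacle is the enriched approximation step: converting the zero $W^{1, 2}$ capacity hypothesis into the existence of $\varphi$ controlled both in $W^{1, 2}$ and in the $L^1$ norm of its Laplacian. This requires exploiting the potential-theoretic structure of $W^{1, 2}$ capacity through the equilibrium potential of a neighborhood of $K$ and the Euler--Lagrange characterization of that potential as a difference between itself and a nonnegative measure of controlled total mass. Once this approximation is in place, the remaining arguments are a direct transcription of the $p > 1$ case.
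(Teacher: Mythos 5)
Your proposal is correct in substance and follows the same architecture as the paper's argument: keep the construction \(w_j = 1 + \sum_{i=0}^j \alpha_i\varphi_i\) of Proposition~\ref{propositionExample} verbatim, and only revisit how \((V_j)_{j\in\N}\) is bounded in \(L^1(\Omega)\). Where you genuinely diverge is in the two key estimates. First, you discard Lemma~\ref{lemmaMazya} on the grounds that it requires \(D^2\varphi_i \in L^1\); in fact, when \(p=1\) the proof of that lemma rests on the identity \(\div\big(\frac{\nabla\varphi}{1+\varphi}\big) = -\frac{\abs{\nabla\varphi}^2}{(1+\varphi)^2} + \frac{\Delta\varphi}{1+\varphi}\), so no Hessian appears and one gets \(\int \frac{\abs{\nabla\varphi}^2}{(1+\varphi)^2} \le \int \frac{\abs{\Delta\varphi}}{1+\varphi}\). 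This is precisely the paper's route: only \(\norm{\Delta\varphi_i}_{L^1}\) needs to be small, and such \(\varphi_i\) exist because \(\capt_{W^{1,2}}\) is equivalent, up to a multiplicative constant, to the capacity generated by \(\norm{\Delta\varphi}_{L^1}\). Your route instead controls the gradient term directly through \(\Bignorm{\frac{\nabla w_j}{w_j}}_{L^2} \le \sum_i \alpha_i \norm{\nabla\varphi_i}_{L^2}\) using \(w_j\ge 1\) — legitimate here exactly because for \(p=1\) the exponent \(2p=2\) matches the capacity norm, whereas for \(p>1\) no such direct bound is available and Maz'ya's inequality is unavoidable — and re-derives the needed approximation from the equilibrium potential \(h_W\) of a small-capacity neighborhood, reading the total variation of \(\Delta h_W = h_W - \mu_W\) off the Euler--Lagrange equation. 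That is a correct, self-contained substitute for the cited capacity equivalence (modulo the routine cutoff and mollification details), at the cost of carrying two smallness conditions on each \(\varphi_i\) instead of one. Your closing step — passing from the pointwise equation off the null set \(K\) to the distributional one using \(u \in C^\infty(\overline\Omega)\) — is also sound, and for \(p=1\) it is actually preferable to invoking Vitali's theorem, since equi-integrability of \((V_j)\) no longer follows from an \(L^1\) bound via H\"older.

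The one step you should tighten is the bookkeeping \(\sum_i \alpha_i\epsilon_i < \infty\). As written, "shrinking \(\epsilon_i\) afterwards" is circular: \(\alpha_i\) is determined by the smoothness induction from \(w_{i-1}\) and \(\varphi_i\), hence from \(\epsilon_i\), so modifying \(\epsilon_i\) modifies \(\alpha_i\), and there is no a priori bound on \(\alpha_i\) in terms of \(\epsilon_i\) (the higher derivatives of \(\varphi_i\) are not controlled by its \(W^{1,2}\) or \(\Delta\)-\(L^1\) norm). Note that this is not a defect introduced by your variant: the paper's own estimate \eqref{eqEstimate1} silently drops the coefficients \(\alpha_i\) from the triangle inequality, so both write-ups face the same quantifier ordering at this point. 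A genuine repair must interleave the two inductions so that the smallness required of \(\varphi_i\) is imposed only relative to quantities already fixed at earlier stages; you should spell this out rather than assert that the sequences can be adjusted independently.
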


The proof of this proposition requires some minor changes compared to the previous one, which concern mostly what we mean by the \(W^{1, 2}\) capacity being a limit of the \(W^{2, p}\) capacities as \(p\) tends to \(1\).
This should be carefully explained since the \(W^{1, 2}\) capacity and the \(W^{2, 1}\) capacity are not equivalent~\citelist{\cite{MazSha:2009}*{Chapter~1} \cite{Ponce:2013}*{Chapter~16}}. 
The \(W^{2, 1}\) capacity is in fact equivalent to the \(\cH^{N - 2}_{\delta}\) Hausdorff outer measures for any \(0 < \delta < +\infty\). 
As a result, taking a compact set \(K \subset \R^{N}\) whose \(N-2\) dimensional Hausdorff measure satisfies \(0 < \cH^{N - 2}(K) < +\infty\), then one has
\[{}
\capt_{W^{1, 2}}{(K)} = 0
\quad \text{and} \quad
\capt_{W^{2, 1}}{(K)} > 0.
\]

The main issue in the proof of Proposition~\ref{propositionExampleW12} is to make sure that all estimates are given in terms of \(\norm{\Delta\varphi}_{L^1(\Omega)}\) instead of \(\norm{D^2\varphi}_{L^1(\Omega)}\).
The reason is that the capacities associated with the quantities
\[
\int\limits_\Omega \abs{\nabla\varphi}^2
\quad \text{and} \quad
\int\limits_\Omega \abs{\Delta\varphi}
\]
are equal up to a multiplicative constant~\citelist{\cite{Brezis_Marcus_Ponce:2007}*{Theorem~4.E.1}}.
We actually need a weaker property, namely for every compact set \(K \subset \R^{N}\) and for every \(\epsilon > 0\) there exists a nonnegative function \(\varphi \in C_c^\infty(\R^{N})\) such that
\(\varphi > 1\) in a neighborhood of \(K\) and
\[
\norm{\Delta \varphi}_{L^1(\R^{N})}
\le C \capt_{W^{1, 2}}{(K)} + \epsilon,
\]
for some constant \(C > 0\) independent of \(K\)~\citelist{\cite{Ponce:2013}*{Chapter~12}}.
Next, when \(p=1\) the proof of the variant of Maz'ya's inequality (Lemma~\ref{lemmaMazya}) gives the stronger property,
\[
\int\limits_{\R^N} \frac{\abs{\nabla\varphi}^{2}}{(1 + \varphi)^{2}}
\le \int\limits_{\R^N} \frac{\abs{\Delta\varphi}}{1 + \varphi},
\]
and in this case estimate \eqref{eqEstimate2} becomes
\[
\Bignorm{\frac{\abs{\nabla w_j}^2}{w_j^2}}_{L^1(\Omega)}
\le C\bigg(\sum_{i=0}^j \norm{\Delta \varphi_i}_{L^1(\Omega)}^\frac{1}{2} \bigg)^2.
\]
Combining theses modifications, we get the proof of Proposition~\ref{propositionExampleW12} by mimicking the proof of Proposition~\ref{propositionExample}.

As a final remark, it is possible to merge Theorem~\ref{theoremStrongMaximumPrinciple} and its counterpart for \(p = 1\) in a single statement by using a suitable capacity defined in terms of the Laplacian.
Indeed, given a smooth bounded open set \(\Omega \subset \R^{N}\) and a compact set \(K \subset \Omega\), for every \(p \ge 1\) consider
\[
\capt_{\Delta^{p}}{(K; \Omega)}
= \inf{\bigg\{ \norm{\Delta \varphi}_{L^{p}(\Omega)}^p 
: \varphi \in C_c^\infty(\Omega)\ \text{nonnegative}\  \text{and}\ \varphi > 1\ \text{in \(K\)}  \bigg\}}.
\]
This capacity has the same compact sets of zero capacity in \(\Omega\) as \(\capt_{W^{2, p}}\) by the Calderón-Zygmund estimates, while for \(p = 1\) it has the same compact sets of zero capacity in \(\Omega\) as \(\capt_{W^{1, 2}}\).{}
In this respect, we can interpret \(\capt_{W^{1, 2}}\) as the limit of \(\capt_{W^{2, p}}\) as \(p\) tends to \(1\) through this equivalent capacity \(\capt_{\Delta^{p}}\).

\section*{Acknowledgements}
The second author (ACP) was supported by the Fonds de la Recherche scientifique--FNRS under grant number J.0025.13.
He warmly thanks the Dipartimento di Matematica of Sapienza--Universit\`a di Roma for the invitation and hospitality.

\begin{bibdiv}

\begin{biblist}

\bib{Adams_Hedberg:1996}{book}{
   author={Adams, David R.},
   author={Hedberg, Lars Inge},
   title={Function spaces and potential theory},
   series={Grundlehren der Mathematischen Wissenschaften},
   volume={314},
   publisher={Springer-Verlag},
   place={Berlin},
   date={1996},
}

\bib{Ancona:1979}{article}{
   author={Ancona, Alano},
   title={Une propri\'et\'e d'invariance des ensembles absorbants par
   perturbation d'un op\'erateur elliptique},
   journal={Comm. Partial Differential Equations},
   volume={4},
   date={1979},
   pages={321--337},
}

\bib{Ancona:2009}{article}{
   author={Ancona, Alano},
   title={Elliptic operators, conormal derivatives and positive parts of
   functions},
   note={With an appendix by H.~Brezis},
   journal={J. Funct. Anal.},
   volume={257},
   date={2009},
   pages={2124--2158},
}

\bib{Baras_Pierre:1984}{article}{
   author={Baras, P.},
   author={Pierre, M.},
   title={Singularit\'es \'eliminables pour des \'equations
   semi-lin\'eaires},
   journal={Ann. Inst. Fourier (Grenoble)},
   volume={34},
   date={1984},
   pages={185--206},
}

\bib{Benilan_Brezis:2004}{article}{
      author={B{\'e}nilan, {\mbox{Ph}}ilippe},
      author={Brezis, Ha\"{\i}m},
       title={Nonlinear problems related to the {T}homas-{F}ermi equation},
        date={2004},
     journal={J. Evol. Equ.},
      volume={3},
       pages={673\ndash 770},
        note={Dedicated to Ph.~B\'enilan},
}

\bib{Boccardo_Gallouet_Orsina:1996}{article}{
   author={Boccardo, Lucio},
   author={Gallou{\"e}t, Thierry},
   author={Orsina, Luigi},
   title={Existence and uniqueness of entropy solutions for nonlinear
   elliptic equations with measure data},
   journal={Ann. Inst. H. Poincar\'e Anal. Non Lin\'eaire},
   volume={13},
   date={1996},
   pages={539--551},
}

\bib{Brelot:1951}{article}{
   author={Brelot, Marcel},
   title={Sur l'allure des fonctions harmoniques et sousharmoniques \`a la
   fronti\`ere},
   journal={Math. Nachr.},
   volume={4},
   date={1951},
   pages={298--307},
}

\bib{Brezis_Marcus_Ponce:2007}{article}{
   author={Brezis, H.},
   author={Marcus, M.},
   author={Ponce, A. C.},
   title={Nonlinear elliptic equations with measures revisited},
   conference={
      title={Mathematical aspects of nonlinear dispersive equations},
   },
   book={
      series={Ann. of Math. Stud.},
      volume={163},
      publisher={Princeton Univ. Press},
      place={Princeton, NJ},
   },
   date={2007},
   pages={55--109},
}

\bib{Brezis_Ponce:2003}{article}{
   author={Brezis, Ha{\"{\i}}m},
   author={Ponce, Augusto C.},
   title={Remarks on the strong maximum principle},
   journal={Differential Integral Equations},
   volume={16},
   date={2003},
   pages={1--12},
}
		
\bib{Brezis_Ponce:2004}{article}{
   author={Brezis, Ha{\"{\i}}m},
   author={Ponce, Augusto C.},
   title={Kato's inequality when \(\Delta u\) is a measure},
   journal={C. R. Math. Acad. Sci. Paris},
   volume={338},
   date={2004},
   pages={599--604},
}
		
\bib{Brezis_Ponce:2008}{article}{
   author={Brezis, Ha{\"{\i}}m},
   author={Ponce, Augusto C.},
   title={Kato's inequality up to the boundary},
   journal={Commun. Contemp. Math.},
   volume={10},
   date={2008},
   pages={1217--1241},
}

\bib{DalMaso:1983}{article}{
   author={Dal Maso, Gianni},
   title={On the integral representation of certain local functionals},
   journal={Ricerche Mat.},
   volume={32},
   date={1983},
   pages={85--113},
}

\bib{DalMaso_Murat_Orsina_Prignet:1999}{article}{
   author={Dal Maso, Gianni},
   author={Murat, Fran{\c{c}}ois},
   author={Orsina, Luigi},
   author={Prignet, Alain},
   title={Renormalized solutions of elliptic equations with general measure
   data},
   journal={Ann. Scuola Norm. Sup. Pisa Cl. Sci. (4)},
   volume={28},
   date={1999},
   pages={741--808},
}

\bib{DLVP:1932}{article}{
      author={de~la Vall{\'e}e~Poussin, Ch.},
       title={Extension de la méthode du balayage de Poincaré et problème de Dirichlet},
        date={1932},
     journal={Ann. Inst. H. Poincar\'e Anal. Non Lin\'eaire},
      volume={2},
       pages={169\ndash 232},
}

\bib{DLVP:1938}{article}{
      author={de~la Vall{\'e}e~Poussin, Ch.},
       title={Potentiel et problème généralisé de Dirichlet},
        date={1938},
     journal={The Mathematical Gazette},
      volume={22},
       pages={17\ndash 36},
}

\bib{Feyel_DelaPradelle:1977}{article}{
      author={Feyel, D.},
      author={de~la Pradelle, A.},
       title={Topologies fines et compactifications associ\'ees \`a certains
  espaces de {D}irichlet},
        date={1977},
     journal={Ann. Inst. Fourier (Grenoble)},
      volume={27},
       pages={121\ndash 146},
}

\bib{Gallouet_Morel:1984}{article}{
      author={Gallou{\"e}t, Thierry},
      author={Morel, Jean-Michel},
       title={Resolution of a semilinear equation in {\(L\sp{1}\)}},
        date={1984},
     journal={Proc. Roy. Soc. Edinburgh Sect. A},
      volume={96},
       pages={275\ndash 288},
        note={Corrigenda: Proc. Roy. Soc. Edinburgh Sect. A \textbf{99} (1985),
  399},
}

\bib{Gilbarg_Trudinger:1998}{book}{
      author={Gilbarg, David},
      author={Trudinger, Neil~S.},
       title={Elliptic partial differential equations of second order},
      series={Grundlehren der Mathematischen Wissenschaften},
      volume={224},
   publisher={Springer-Verlag},
     address={Berlin},
        date={1998},
}

\bib{Kato:1972}{article}{
   author={Kato, Tosio},
   title={Schr\"odinger operators with singular potentials},
   journal={Israel J. Math.},
   volume={13},
   date={1972},
   pages={135--148},
}

\bib{Littman_Stampacchia_Weinberger:1963}{article}{
   author={Littman, W.},
   author={Stampacchia, G.},
   author={Weinberger, H. F.},
   title={Regular points for elliptic equations with discontinuous
   coefficients},
   journal={Ann. Scuola Norm. Sup. Pisa (3)},
   volume={17},
   date={1963},
   pages={43--77},
}

\bib{Lucia:2005}{article}{
   author={Lucia, Marcello},
   title={On the uniqueness and simplicity of the principal eigenvalue},
   journal={Atti Accad. Naz. Lincei Cl. Sci. Fis. Mat. Natur. Rend. Lincei
   (9) Mat. Appl.},
   volume={16},
   date={2005},
   pages={133--142},
}

\bib{Mazya:1973}{article}{
   author={Maz{\cprime}ya, Vladimir G.},
   title={Certain integral inequalities for functions of several variables},
   conference={
      title={Problems of mathematical analysis, No. 3: Integral and
      differential operators, Differential equations},
   },
   book={
      publisher={Izdat. Leningrad. Univ., Leningrad},
   },
   date={1972},
   pages={33--68},
   note={English transl., J. Soviet Math. \textbf{1} (1973), 205--234},
}

\bib{MazSha:2009}{book}{
   author={Maz'ya, Vladimir G.},
   author={Shaposhnikova, Tatyana O.},
   title={Theory of Sobolev multipliers},
   series={Grundlehren der Mathematischen Wissenschaften},
   volume={337},
   publisher={Springer-Verlag},
   place={Berlin},
   date={2009},
}
\bib{Montenegro_Ponce:2008}{article}{
   author={Montenegro, Marcelo},
   author={Ponce, Augusto C.},
   title={The sub-supersolution method for weak solutions},
   journal={Proc. Amer. Math. Soc.},
   volume={136},
   date={2008},
   pages={2429--2438},
}

\bib{Moser:1961}{article}{
   author={Moser, J{\"u}rgen},
   title={On Harnack's theorem for elliptic differential equations},
   journal={Comm. Pure Appl. Math.},
   volume={14},
   date={1961},
   pages={577--591},
}

\bib{Orsina_Ponce:2008}{article}{
   author={Orsina, Luigi},
   author={Ponce, Augusto C.},
   title={Semilinear elliptic equations and systems with diffuse measures},
   journal={J. Evol. Equ.},
   volume={8},
   date={2008},
   pages={781--812},
}

\bib{Ponce:2012}{article}{
  author={Ponce, Augusto C.},
  title={Selected problems on elliptic equations involving measures},
  date={2012},
  eprint={http://arxiv.org/pdf/1204.0668},
  note={Winner monograph of the Concours annuel 2012 in Mathematics of the Acad\'{e}mie royale de Belgique}
}

\bib{Ponce:2013}{book}{
  author={Ponce, Augusto C.},
  title={Elliptic PDEs, Measures and Capacities. From the Poisson equation to Nonlinear Thomas-Fermi problems},
  series={EMS Tracts in Mathematics},
  volume={23},
  publisher={European Mathematical Society (EMS)},
  address={Zürich},
  date={2015},
  note={Winner of the 2014 EMS Monograph Award},
 }

\bib{Schwartz:1945}{article}{
   author={Schwartz, Laurent},
   title={G\'en\'eralisation de la notion de fonction, de d\'erivation, de
   transformation de Fourier et applications math\'ematiques et physiques},
   journal={Ann. Univ. Grenoble. Sect. Sci. Math. Phys. (N.S.)},
   volume={21},
   date={1945},
   pages={57--74},
}

\bib{Serrin:1964}{article}{
   author={Serrin, James},
   title={Local behavior of solutions of quasi-linear equations},
   journal={Acta Math.},
   volume={111},
   date={1964},
   pages={247--302},
}

\bib{Stampacchia:1965}{article}{
   author={Stampacchia, Guido},
   title={Le probl\`eme de Dirichlet pour les \'equations elliptiques du
   second ordre \`a coefficients discontinus},
   journal={Ann. Inst. Fourier (Grenoble)},
   volume={15},
   date={1965},
   pages={189--258},
}

\bib{Stein:1970}{book}{
   author={Stein, Elias M.},
   title={Singular integrals and differentiability properties of functions},
   series={Princeton Mathematical Series, No. 30},
   publisher={Princeton University Press},
   place={Princeton, N.J.},
   date={1970},
}

\bib{Trudinger:1973}{article}{
   author={Trudinger, Neil S.},
   title={Linear elliptic operators with measurable coefficients},
   journal={Ann. Scuola Norm. Sup. Pisa (3)},
   volume={27},
   date={1973},
   pages={265--308},
}

\bib{VanSchaftingen_Willem:2008}{article}{
   author={Van Schaftingen, Jean},
   author={Willem, Michel},
   title={Symmetry of solutions of semilinear elliptic problems},
   journal={J. Eur. Math. Soc. (JEMS)},
   volume={10},
   date={2008},
   pages={439--456},
}

\end{biblist}

\end{bibdiv}


\end{document}